\documentclass[oneside,reqno]{amsart}

\usepackage{Macros}

\title{Generalized Logarithmic Sobolev Inequality by the JKO Scheme}
\author{Thibault Caillet, Fanch Coudreuse}

\address{Universite Sorbonne Paris Nord, IUT de Saint-Denis, Place du 8 Mai 1945, 93200 Saint-Denis, France.}

\email{thibault.caillet@univ-paris13.fr}

\address{Universite Claude Bernard Lyon 1, ICJ UMR5208, CNRS, Ecole Centrale de Lyon, INSA Lyon, Université Jean Monnet, 69622 Villeurbanne, France.}

\email{coudreuse@math.univ-lyon1.fr}

\makeindex

\begin{document}

\begin{abstract}
    Using a discrete Bakry-Émery method based on the JKO scheme, relying on the dissipation of entropy and Fisher information along a discrete flow, we establish new generalized logarithmic Sobolev inequality for log-concave measures of the form $e^{-V}$ under strict convexity assumptions on $V$. We then show how this method recovers some well-known inequalities. This approach can be viewed as interpolating between the Bakry-Émery method and optimal transport techniques based on geodesic convexity.
\end{abstract}

\maketitle

\section{Introduction}

    One says that a probability measure $\eta$ defined on a domain $\Omega$ satisfies a generalized logarithmic Sobolev inequality if, for all sufficiently regular functions $g$ with $\int_\Omega g \dd{\eta} = 1$, one has
    \begin{equation} \label{eq: Generalized_Sobolev_Ineq}
        \int g \log g \dd{\eta} \leq \int_{\mathbb{R}^d} G(\nabla \log g) g \dd{\eta} 
    \end{equation}
    for some function $G : \bb{R}^d \to [0,+\infty)$. Proving such a general inequality is generally quite challenging, even in the simplest cases. Such inequality has been mostly studied for its connections with other theories, in particular when $G$ is quadratic. Among the many approaches in the literature, two main frameworks emerge as general tools for studying such inequalities. The first is the Bakry--Émery approach based on Markov flows, which applies in the rather general setting of Markov semi-groups. The second approach is based on optimal transport and the geodesic convexity of functionals on the space of probability measures. In fact, these two approaches are often connected through the theory of gradient flows in Wasserstein space, and results can frequently be translated from one framework to the other.
    
    In this work, we propose to combine these two approaches and prove several Log-Sobolev-type inequalities using a discrete version of the Bakry-Émery technique. It is well known since the seminal work of Jordan, Kinderlehrer and Otto \cite{JKO} that the so-called JKO scheme can be used to approximate various classes of doubly non-linear diffusion equations, notably some that arise in the study of functional inequalities via the Bakry-Émery method. Our strategy is to derive two dissipation identities along the discrete flow: an energy dissipation inequality obtained via geodesic convexity, and a dissipation inequality for a generalized Fisher information using the so-called five gradients inequality. By combining these estimates with the convergence of the discrete flow to equilibrium, we obtain logarithmic Sobolev inequalities under suitable compatibility conditions between the energy and the generalized Fisher information. 
    
    We believe that this discrete analogue not only interpolates between the Bakry–Émery and optimal transport methods, but may also be used to derive inequalities under conditions that do not fall within the scope of either approach.

    Although we were unable to identify specific examples yielding genuinely new inequalities beyond those already known in the functional inequalities literature, we were able to recover several classical inequalities in the case of potentials 
    $V$ with $p$-power moduli of convexity. Furthermore, in the radial case, our method yields a simple criterion (Corollary~\ref{coro: radially_sym_case}).

\subsection{Statement of the Main Result}
    
    Before stating our theorem, we shall introduce the following notations that will remain consistent in the rest of the paper.
    \begin{itemize}
        \item $h$ is strictly convex, superlinear, radially symmetric, of class $C^1(\bb{R}^d)$, and satisfies $h(0) = 0$. We shall call a function $h$ satisfying these assumptions a cost function.
        \item $H : \bb{R}^d \to \bb{R}$ is convex, radially symmetric, of class $C^1(\bb{R}^d \setminus \{ 0 \})$ and satisfies $H(0) = 0$.
        \item $L : \bb{R}^d \to \bb{R}$ is convex, superlinear function and satisfies $L(0) = 0$.
        \item $\Omega$ is a convex domain with non-empty interior, it will be, otherwise stated, bounded.
        \item $V : \Omega \to \bb{R}$ is a $C^1(\Omega) \cap W^{1,\infty}(\Omega)$ convex function such that $\eta := e^{-V} \in \scr{P}(\Omega)$.
    \end{itemize}

    With these notations, our main result is the following.

    \begin{theorem} \label{thm: log_Sobolev}
        Assume that $V$ admits moduli of convexity $\sigma$ and modulus of monotonicity $\omega$ (see definition \ref{def: modulus}) on $\Omega$ eventually unbounded. \\
        Under the point-wise bound for all $z \neq 0$:
        \begin{equation}
            L(z) + L^*(\nabla h^*(z)) \leq \sigma(\nabla h^*(z)) + \alpha(z) \omega(\nabla h^*(z)) \qquad \alpha(z) := \frac{|\nabla H(z)|}{|\nabla h^*(z)|}
        \end{equation}
        Then, for $G := H + L$, $\eta$ satisfies the modified Log-Sobolev inequality: for all $g > 0$ with $\int_\Omega g \dd{\eta} = 1$, and $\nabla \log g \in W^{1,\infty}(\Omega)$ it holds that
        \begin{equation}
            \int_\Omega g \log g \dd{\eta} \leq \int_\Omega G(\nabla \log g) g \dd{\eta} 
        \end{equation}
    \end{theorem}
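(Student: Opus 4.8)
The plan is to implement the discrete Bakry–Émery strategy announced in the introduction: approximate the relevant diffusion by the JKO scheme associated with the cost $h$ and the energy functional $\mathcal E(\rho) := \int \rho \log \rho \,\dd\eta$ (equivalently the relative entropy with respect to $\eta = e^{-V}$), and track two quantities along the discrete flow $(\rho^n_\tau)_n$ produced by the minimizing movements
\[
    \rho^{n+1}_\tau \in \operatorname*{arg\,min}_\rho \; \mathcal E(\rho) + \frac{1}{\tau} W_h(\rho, \rho^n_\tau),
\]
where $W_h$ is the transport cost built from $h$. The first quantity is the entropy $\mathcal E(\rho^n_\tau)$ itself; the second is a \emph{generalized Fisher information}
\[
    \mathcal I_G(\rho) := \int G(\nabla \log (\rho)) \,\rho \,\dd\eta
\]
(with $\nabla\log\rho$ meant relative to $\eta$, i.e.\ $\nabla\log g$ for $g = \rho$). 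The proof then consists of three estimates, combined at the end.

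First I would establish the \textbf{energy dissipation inequality}: by geodesic convexity of $\mathcal E$ along $W_h$-geodesics (which uses the convexity of $V$, hence displacement convexity of the relative entropy), one gets for the discrete flow
\[
    \mathcal E(\rho^n_\tau) - \mathcal E(\rho^{n+1}_\tau) \;\geq\; \frac{1}{\tau}\int L^*\!\big(\tfrac{\rho^n_\tau - \rho^{n+1}_\tau}{?}\big)\dots
\]
— more precisely, a lower bound of the form $\mathcal E(\rho^n_\tau) - \mathcal E(\rho^{n+1}_\tau) \geq \tau\int \big[L(\dots) + L^*(\dots)\big]$, the two Legendre-dual terms coming respectively from the dissipation of the energy against the metric slope and from a Young/Legendre splitting of the optimality condition of the JKO step. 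This is where the left-hand side $L(z) + L^*(\nabla h^*(z))$ of the hypothesis enters. Second, I would establish the \textbf{Fisher information dissipation inequality}: differentiating (discretely) $\mathcal I_G(\rho^n_\tau)$ along the flow and using the \emph{five gradients inequality} together with the moduli of convexity $\sigma$ and monotonicity $\omega$ of $V$, one obtains a matching \emph{upper} bound, producing the right-hand side $\sigma(\nabla h^*(z)) + \alpha(z)\,\omega(\nabla h^*(z))$ where $\alpha(z) = |\nabla H(z)|/|\nabla h^*(z)|$ — the factor $\alpha$ arising because the $H$-part of $G$ contributes through $\nabla H$ rather than through the transport variable directly. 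The pointwise hypothesis of the theorem is exactly the statement that the Fisher-dissipation upper bound is dominated by the energy-dissipation lower bound, so that along the discrete flow
\[
    \frac{\dd}{\dd n}\,\mathcal E(\rho^n_\tau) \;\leq\; -\,\big(\text{something} \geq 0\big), \qquad \mathcal E(\rho^n_\tau) \leq \mathcal I_G(\rho^n_\tau)\ \text{is propagated}.
\]

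Third, I would use \textbf{convergence to equilibrium}: as $n \to \infty$ (and $\tau \to 0$), $\rho^n_\tau \to 1$ (the density of $\eta$ itself, for which both $\mathcal E$ and $\mathcal I_G$ vanish since $\nabla \log 1 = 0$ and $G(0)=H(0)+L(0)=0$), while $\mathcal E$ decreases and the differential inequality $\mathcal E' \leq \mathcal I_G' \leq \dots$ forces, upon integrating from $0$ to $\infty$, the bound $\mathcal E(g) \leq \mathcal I_G(g)$, i.e.
\[
    \int_\Omega g \log g \,\dd\eta \;\leq\; \int_\Omega G(\nabla \log g)\, g \,\dd\eta.
\]
Concretely one shows $\tfrac{\dd}{\dd t}\mathcal E(\rho_t) = -\,[\text{dissipation}] \geq -\,\mathcal I_G(\rho_t)$ and, separately, that $t\mapsto \mathcal I_G(\rho_t)$ is itself controlled (nonincreasing up to the compatibility condition), which is the content of the second step; Grönwall-type integration then closes the argument. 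I would take $g$ with $\nabla\log g \in W^{1,\infty}(\Omega)$ as the regular initial datum (this regularity is what makes the five gradients inequality and the chain-rule computations licit), and handle general $g$ by approximation only if the paper needs it — the statement already restricts to this class.

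\textbf{The main obstacle} I expect is the rigorous justification of the Fisher-information dissipation inequality at the discrete (JKO) level: passing from the formal Bakry–Émery computation $\Gamma_2$-type bound to a genuine \emph{discrete-in-time} inequality requires the five gradients inequality in the form adapted to the cost $h$ and to the nonlinear mobility/energy pair here, plus enough regularity of the JKO minimizers $\rho^{n+1}_\tau$ to differentiate $G(\nabla\log\rho^{n+1}_\tau)$ — elliptic regularity for the Euler–Lagrange equation of the JKO step, uniform in $\tau$. Controlling the error terms from the discreteness (the $O(\tau)$ remainders in the Taylor expansions of both $\mathcal E$ and $\mathcal I_G$ along one step) so that they vanish in the limit, while keeping the two dissipation bounds comparable via the pointwise hypothesis, is the delicate analytic core; the geodesic-convexity (energy) side and the convergence-to-equilibrium side are comparatively standard.
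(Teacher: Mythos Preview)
Your overall architecture is right --- two discrete dissipation inequalities along the JKO flow, combined and sent to equilibrium --- but the allocation of terms between the two inequalities is off, and this is where the argument would break. In the paper, the splitting $G = H + L$ is not cosmetic: the $H$-Fisher information $\scr{I}_H$ is what decays via the five gradients inequality, and that step produces \emph{only} the $\alpha(z)\,\omega(\nabla h^*(z))$ term (the modulus of monotonicity, coming from $(\nabla V(Tx)-\nabla V(x))\cdot(Tx-x)\ge\omega(Tx-x)$). The modulus of convexity $\sigma$ does \emph{not} appear in the Fisher step; it comes from an \emph{improved} geodesic convexity inequality on the entropy side, namely $\scr{F}(\rho)\ge \scr{F}(\mu)+\int \nabla u_\mu\cdot(\id-T)\,\dd\mu+\int\sigma(\id-T)\,\dd\mu$, after which a Young splitting $\nabla u_\mu\cdot(\id-T)\ge -L(\nabla u_\mu)-L^*(\id-T)$ introduces the $L$ and $L^*$ terms. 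So the entropy step yields an \emph{upper} bound $\scr{F}(\rho_k)-\scr{F}(\rho_{k+1})\le \scr{I}_L(\rho_k)-\scr{I}_L(\rho_{k+1})+\int[L+L^*-\sigma]$, and the Fisher step yields $\scr{I}_H(\rho_k)-\scr{I}_H(\rho_{k+1})\ge \int\alpha\,\omega$; adding them gives $\scr{F}(\rho_0)-\scr{F}(\rho_n)\le \scr{I}_G(\rho_0)-\scr{I}_G(\rho_n)+\sum\Delta_k$ with $\Delta_k=\int[L+L^*-\sigma-\alpha\omega]\le 0$ by hypothesis. If you try, as you suggest, to extract $\sigma$ from the five gradients inequality, you simply will not find it there --- that inequality sees $\nabla V$ only through its monotonicity, not its Bregman divergence.

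Two further corrections. First, there is no $\tau\to 0$ limit and no $O(\tau)$ remainder to control: the time step is absorbed into $h$, the discrete inequalities above are exact, and one only sends $n\to\infty$ using uniform convergence $\rho_n\to\eta$, $u_n\to 0$ (compactness from the uniform Lipschitz bound on JKO minimizers, plus a fixed-point argument). Second, the ``main obstacle'' you identify is largely absent: on a bounded convex $\Omega$ the Kantorovich potentials are Lipschitz with a cost-dependent constant, the optimality condition $u_\rho = -\psi + C$ immediately makes $\log Q[\mu]$ Lipschitz, and the five gradients inequality needs only $\rho,\mu\in W^{1,1}\cap\scr{P}$. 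The genuinely delicate point is rather the \emph{convergence $\scr{I}_G(\rho_n)\to 0$}, which the paper handles by rewriting $\scr{I}_G(\rho_n)=\int G(\nabla h(x-y))\,\dd\gamma_n$ via the optimality condition and passing to the limit along optimal plans $\gamma_n\to(\id,\id)_\#\eta$.
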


    If the conditions stated in the theorem appear complicated, we shall show in section $4.1$ how to derive a simpler criterion than in the theorem.

\subsection{A Remark on the choice of Entropy and on regularity hypotheses}

    It is natural to ask if the method applies to more complicated inequalities, replacing the log-entropy by more general $\phi$-entropies, in the spirit of \cite{IneqGeneEntOptTranspo} and \cite{EntFlowFuncIneqConvSets}. It will be clear by looking at the proof that we need to focus on the logarithm in the general case. Indeed, whenever we work with a generic $H$, one needs to work with the classical log-entropy, as doubly non-linear versions of the five-gradient inequalities are not true in general. On the other hand, if one chooses $H = C h^*$, then we can use the geodesic convexity of the entropy under McCann conditions (see \cite{McCannCondition}) to obtain a more general result. As justifying the computations in this general case proves to be trickier and applications of more complicated versions of Log-Sobolev inequalities are less clear, we chose to focus on the logarithmic entropy.

    Similarly, regularity hypotheses on $H,L,V,h$ might be dropped by an approximation procedure; however we chose to keep this less general framework as it makes the proof much easier without losing the heart of the argument. Since our goal was more an emphasis on the method than on obtaining the most general condition, we choose to stick with the hypothesis.
    
\subsection{Comparison with the Literature}

    The logarithmic Sobolev inequality is usually stated in the form  
    \begin{equation*}
        \int f^2 \log f^2 \dd{\eta} \leq C \int |\nabla f|^2 \dd{\eta} \qquad \int f^2 \dd{\eta} = 1
    \end{equation*}
    which corresponds to the case $G = C |\cdot|^2$. This inequality was originally introduced by Gross (see \cite{Gross1}) and proven to hold in the case of the standard Gaussian measure on $\mathbb{R}^d$ with the optimal constant $1/2$. Since then, many probability measures satisfying such inequality for different $C$ have been found. A major characterization is the so-called Bakry-Émery criterion, based on the carré du champ theory and linked to the curvature-dimension criterion. This can be used, for instance, in the case $\mu \propto e^{-V}$ with $V$ $\Lambda$-convex to obtain the constant $1/(2\Lambda)$. We refer to \cite{Ledoux1}, \cite{Bakry1}, and the monograph \cite{LivreIvan} for this part of the theory. This theory extends well beyond the standard $\mathbb{R}^d$ space, covering Riemannian manifolds, metric spaces, and even quite general Markov processes without referring to a particular underlying smooth structure. For further references, we refer again to the monograph \cite{LivreIvan}, as well as the lecture notes \cite{LecNoteLogSoboIneq} and the book \cite{InitLogSoboIneq}.  
    
    A straightforward generalization replaces the square by with $p$-power, yielding inequalities of the form  
    \begin{equation*}
        \int f^q \log f^q \dd{\eta} \leq C \int |\nabla f|^q \dd{\eta} \qquad \int f^q \dd{\eta} = 1
    \end{equation*}
    The class of measures satisfying such inequalities has been studied in great depth by Bobkov and Zegarlinski in \cite{EntBoundIsop} and Bobkov and Ledoux in \cite{BobkovLedoux}, where links to other inequalities, concentration phenomena, and examples have been found (see also \cite{GeneOptLpEucLogSoboIneqHamJacEq} for a Euclidean version).   
    
    Another popular choice of modified Log-Sobolev inequalities are inequalities of the form  
    \begin{equation*}
        \int e^g \log e^g \dd{\eta} \leq C \int H_{a,q}(\nabla g) e^g \dd{\eta}
    \end{equation*}
    where  
    \begin{equation*}
        H_{a,q} := 
        \begin{cases} 
            \frac{1}{2} |x|^2 & \text{if } |x| \leq a \\ 
            \frac{a^{2-q}}{q} |x|^2 + \frac{1}{2} - \frac{1}{q} & \text{if } |x| \geq a.
        \end{cases}
    \end{equation*}
    This type of inequality is a slight improvement over the $q$-logarithmic Sobolev inequality for $q \in [1,2)$ since the behavior is better near zero. It is also the type of inequality needed to recover the Poincaré inequality for $\mu$ by linearization around $1$ of the logarithmic Sobolev inequality. In the case $q = +\infty$, this inequality has been proven to hold for the exponential measure $\eta \propto e^{-|x|}$ by Bobkov and Ledoux \cite{PoinIneqTalagConcPhenoExpoDitrib}. The general class of inequalities was introduced by Gentil, Guillin, and Miclo in \cite{GentilGuillinMiclo1}, with extensions to log-concave measures with control on tail behavior in \cite{ModLogSoboIneqNullCurv}. For instance, the exponential measure $\eta \propto e^{-|x|^p}$ is known to satisfy such an inequality on $\bb{R}$. Since then, such inequalities have been deeply studied on the real line, and sufficient conditions beyond the log-concave case have been found by Barthe and Roberto in \cite{ModLogSoboIneqR}. Unfortunately, since those conditions do not depend on any modulus of convexity for $V$, these results cannot be obtained using our theorem.  
    
    As explained in the introduction, using optimal transport to derive logarithmic Sobolev inequalities is not new. For instance, for the classical logarithmic Sobolev inequality, as shown by Otto and Villani \cite{GeneIneqTalaLinkLogSoboIneq}, or by Cordero \cite{AppMassTransGaussTypeIneq}. This can also be linked to the curvature-dimension condition and serves as a building block for one of the modern perspectives on these conditions; see Villani’s monographs \cite{VillaniO&N}, \cite{VillaniOT} in this regard.  
    
    This type of reasoning was widely expanded for the non-quadratic case using a modified version of geodesic convexity, applied by Agueh \cite{Agueh} in his proof of the convergence of the JKO scheme for non-quadratic costs. It was later proven by Cordero, Gangbo, and Houdré in \cite{IneqGeneEntOptTranspo} that for $V$ admitting a convex modulus of convexity $\omega$, one can derive a modified logarithmic Sobolev inequality with $\beta = \omega^*$. In fact, their proof goes beyond the logarithmic case and applies to more general functionals satisfying the McCann condition (we also mention \cite{EntFlowFuncIneqConvSets} by Zugmeyer for a flow approach to such inequalities). We shall explain later more precisely the difference between our approach and Cordero-Gangbo-Houdré's one. In the same line, see also \cite{GeomIneqGenCompPrinIntGas} by Agueh, Ghoussoub, and Kang for other applications. Beyond the log-concave case, still using transport tools, Barthe, Kolesnikov, and Alexander found in \cite{MassTraspVarLogSoboIneq} conditions linked to the integrability behavior of the function $V$. These types of modified logarithmic Sobolev inequalities have also been linked to the Talagrand inequality and concentration phenomena in most of the previously mentioned articles in particular cases, but also in full generality by Gozlan, Roberto, and Samson in a series of articles \cite{HamJacEqMetrSpaceTranspEnt} \cite{CharacTalaTransEntIneqMetSpace} \cite{NewCharacTalaTransEntIneqApp} \cite{ConcLogSoboPoincIneq}.
    
\subsection{The classical Bakry-Emery's Proof}

    Before getting into the proof, and in order to compare our method to the usual continuous flow method, we think it is instructive to recall the formal proof of the logarithmic inequality for $\Lambda$-convex potentials $V$. We omit regularity issues for clarity. The fundamental idea of Bakry and Emery is to consider by how much the Fisher's information $\mathcal{I}[\rho] := \int |\nabla (\log \rho + V)|^2 \dd{\rho}$ and the relative entropy $\mathcal{F}[\rho] = \int (\log \rho + V) \dd{\rho}$ dissipate along the flow generated by $V$, that is, along the solution to the Fokker-Planck's equation
    \begin{equation*}
        \partial_t \rho_t = \Delta \rho_t + \nabla \cdot (\rho_t \nabla V) = \nabla \cdot (\rho_t \nabla [\log \rho_t + V])
    \end{equation*}
    In fact it will be more convenient to work with the density of $\rho_t$ with respect to $\eta = e^{-V}$, i.e. $g_t = \rho_t e^V$, which solves the so-called heat equation associated to the Witten Laplacian:
    \begin{equation*}
        \partial_t g_t = \Delta g_t - \nabla V \cdot \nabla g_t
    \end{equation*}
    This transforms the entropy into $\int g_t \log g_t \dd{\mu}$ and the Fisher's information into $\int |\nabla \log g_t|^2 g_t \dd{\eta}$. One then shows that, for $h_t := \log g_t$
    \begin{equation*}
        \dv{t} \cl{F}[\rho_t] = -\cl{I}[\rho_t] \quad \dv{t} \mathcal{I}[\rho_t] =  -2 \int [||D^2 h_t||_{HS}^2 - D^2 V[\nabla h_t,\nabla h_t]|| g_t \dd{\mu} \leq -2 \Lambda \cl{I}[\rho_t]
    \end{equation*}
    provided that $V$ is $\Lambda$-convex. By Gronwall lemma the second inequality gives $\cl{I}[\rho_t] \leq e^{-2 \Lambda t} \cl{I}[\rho_0]$, and integrating the first one gives
    \begin{equation*}
        \cl{F}[\rho_0] - \cl{F}[\rho_t] = \int_0^t \cl{I}[\rho_s] \dd{s}\leq \frac{1}{2 \Lambda} (1 - e^{-2 \Lambda t}) \cl{I}[\rho_0]
    \end{equation*}
    taking the limit $t \to +\infty$ (using that $\cl{I}[\rho_t] \to 0$ to show the convergence of $\rho_t$ to $\eta$) we deduce the Log-Sobolev inequality
    \begin{equation*}
        \cl{F}[\rho_0] \leq \frac{1}{2 \Lambda} \cl{I}[\rho_0]
    \end{equation*}
    The main ingredients of this method that we will try to mimic in our proofs are the following:
    \begin{enumerate}
        \item A flow driven by a PDE starting from the initial data $\rho_0$. 
        \item An inequality between the dissipation of the entropy and the dissipation of the Fisher's information along this flow. 
        \item The convergence of the flow to the Gibbs measure $e^{-V}$. 
    \end{enumerate}
    In fact, and in contrast with the usual Bakry-Emery theory, we will not need to differentiate twice the entropy, but only to compare the dissipation of two functionals along the flow. Indeed, a careful inspection of Bakry-Emery's proof shows that one only needs an inequality of the form 
    \begin{equation*}
        \Lambda \dv{t} \scr{F}[\rho_t] \geq \dv{t} \scr{I}[\rho_t]
    \end{equation*}
    and convergence results, in order to derive $\scr{F}[\rho_0] \leq \scr{I}[\rho_0]$. That is, $\scr{I}$ does not need to be the dissipation of the entropy. Indeed, this inequality yields $\Lambda (\scr{F}[\rho_0]-\scr{F}[\rho_t]) \leq \scr{I}[\rho_0] - \scr{I}[\rho_t]$, which gives the inequality provided $\scr{F}[\rho_t],\scr{I}[\rho_t]$ converges to $0$ as $t \to +\infty$. 
    
\subsection{Acknowledgment} 

    The authors acknowledge the support of the European Union under the ERC Advanced Grant 101054420 (EYAWKAJKOS). \\
    The authors would also like to thank Filippo Santambrogio for suggesting this problem and the idea of the proof and for useful discussions around technical points, and Ivan Gentil for pointing out various references related to Log-Sobolev inequalities. 

\section{Preliminaries}

    Let us first introduce some basic tools and results that will be needed for our proof. 
    
\subsection{Optimal transportation}

    The content of this section can be found in classical textbooks in this area. We refer for instance to \cite{OTAM}, \cite{VillaniOT} , and \cite{AGS} for the general theory of optimal transportation. 
    
    \begin{definition}[Optimal Transport Problem] \label{def: OT}
        Given $\mu,\nu \in \scr{P}(\Omega)$, the optimal transport cost associated to $h$ between $\mu$ and $\nu$ is defined as
        \begin{equation} \label{eq: definition_OT}
            \scr{T}_h(\mu,\nu) = \min \left \{ \int_{\Omega \times \Omega} h(x-y) \dd{\gamma}, \gamma \in \Pi(\mu,\nu) \right \}
        \end{equation}
        where the minimum is taken over all probability measures $\gamma$ on $\Omega \times \Omega$ with first and second marginals given by $\mu$ and $\nu$ (called transport plans between $\mu$ and $\nu$). 
    \end{definition}
    
    Existence of optimizers follows from the direct method. We shall make extensive use of the dual formulation of the optimal transport problem (see, for instance, \cite{OTAM}, Chapter 1), called Kantorovitch duality, and its important (and highly non-trivial) consequence: the generalized Brenier's theorem. 
    
    \begin{theorem}[Kantorovitch Duality and generalized Brenier's theorem] \label{thm: Kanto_duality}
        The Kantorovitch dual formulation is given by
        \begin{equation} \label{eq: Kanto_duality}
            \scr{T}_h(\mu,\nu) = \max \left \{  \int_\Omega \psi \dd{\mu} + \int_\Omega \phi \dd{\nu}, \psi,\phi \in C(\Omega), \psi(x) + \phi(y) \leq h(x-y) \right \} 
        \end{equation}
        Moreover 
        \begin{enumerate}
            \item The maximum is attained at a (not necessarily unique) pair $(\psi, \phi)$, called Kantorovitch potentials, which are $c$-conjugate to each other, that is
            \begin{equation*} 
                \psi(x) = \inf_{y \in \Omega} h(x-y) - \phi(y) \quad \phi(y) = \inf_{x \in \Omega} h(x-y) - \psi(x) 
            \end{equation*}
            \item If $h$ is $L$-Lipschitz on $\Omega$, then any $c$-conjugate Kantorovitch potentials $\psi,\phi$ are also $L$-Lipschitz. In particular, under our hypotheses on $h$, the Kantorovitch potentials are always Lipschitz with constant only depending on $\Omega$ and $h$.
            
            \item If $\mu \ll \mathcal{L}^d$, then $\psi$ is differentiable $\mu$-a.e. And the unique optimal transport plan is given by $(\id ,T)_\# \mu$, where $T = \id  - \nabla h^*(\nabla \psi)$ (well defined $\mu$-a.e. since $h^*$ is $C^1$ by strict convexity of $h$). $T$ is called the optimal transport map between $\mu$ and $\nu$. 
            \item If we also have $\nu \ll \mathcal{L}^d$, then the optimal transport map $S$ from $\nu$ to $\mu$ satisfies $S \circ T = \id $ $\mu$-a.e. and $T \circ S = \id $ $\nu$-a.e. 
        \end{enumerate}
    \end{theorem}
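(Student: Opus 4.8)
The plan is to derive the duality equality first and then read off the remaining assertions by standard arguments; the only genuinely delicate input is the absence of a duality gap. Weak duality is immediate: for any admissible pair $(\psi,\phi)$ with $\psi(x)+\phi(y)\le h(x-y)$ and any $\gamma\in\Pi(\mu,\nu)$, integrating the constraint against $\gamma$ gives $\int_\Omega\psi\dd{\mu}+\int_\Omega\phi\dd{\nu}\le\int h(x-y)\dd{\gamma}$, so $\sup\le\inf=\scr{T}_h(\mu,\nu)$. For the reverse inequality I would invoke the Fenchel--Rockafellar theorem (or equivalently a Sion minimax argument) applied to $\gamma\mapsto\int h(x-y)\dd{\gamma}$ together with the marginal constraints, using that $\overline\Omega$ is compact and $c(x,y)=h(x-y)$ is bounded and continuous on $\overline\Omega\times\overline\Omega$; this produces no gap and hence \eqref{eq: Kanto_duality}. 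I expect this to be the main obstacle, as it is the one point requiring a nontrivial functional-analytic theorem rather than direct manipulation.

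\emph{Existence, $c$-conjugacy and Lipschitz bounds (parts 1--2).} Given any admissible pair, replacing $\phi$ by $\psi^c$ and then $\psi$ by $(\psi^c)^c$ (the $c$-transforms in the convention of the statement) only increases the objective while preserving admissibility, so it suffices to maximize over $c$-conjugate pairs. For regularity, fix $y$ and observe that $x\mapsto h(x-y)-\phi(y)$ is $L$-Lipschitz on $\Omega$ as soon as $h$ is $L$-Lipschitz on the bounded set $\Omega-\Omega$; since an infimum of uniformly $L$-Lipschitz functions is $L$-Lipschitz, $\psi=\phi^c$ is $L$-Lipschitz, and symmetrically for $\phi$. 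Because $h\in C^1$ is Lipschitz on bounded sets, this gives part 2 with a constant depending only on $\Omega$ and $h$. After normalizing $\psi(x_0)=0$, boundedness of $\Omega$ makes the $c$-conjugate competitors equibounded and equi-Lipschitz, so Arzelà--Ascoli extracts a uniform limit along a maximizing sequence; the limit is admissible, $c$-conjugate, and attains the supremum by continuity of the linear functional, which is part 1.

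\emph{Brenier map (part 3).} No-gap duality forces complementary slackness: any optimal $\gamma$ and optimal pair satisfy $\psi(x)+\phi(y)=h(x-y)$ on $\operatorname{supp}\gamma$. Since $\psi$ is Lipschitz, Rademacher's theorem gives differentiability $\mathcal{L}^d$-a.e., hence $\mu$-a.e.\ because $\mu\ll\mathcal{L}^d$. At a differentiability point $x$ with $(x,y)\in\operatorname{supp}\gamma$, the function $x'\mapsto h(x'-y)-\psi(x')$ attains its minimum at $x'=x$; differentiating and using $h\in C^1$ yields $\nabla\psi(x)=\nabla h(x-y)$. Strict convexity and superlinearity of $h$ make $h^*$ of class $C^1$ with $\nabla h^*=(\nabla h)^{-1}$, so inversion gives $y=x-\nabla h^*(\nabla\psi(x))=:T(x)$. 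Thus $\mu$-a.e.\ the conditional of $\gamma$ over $x$ is the Dirac mass at $T(x)$, which forces $\gamma=(\id,T)_\#\mu$ and in particular uniqueness.

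\emph{Inverse map (part 4).} As $h$ is radially symmetric we have $h(z)=h(-z)$, so the problem is symmetric in $\mu$ and $\nu$; applying part 3 with the roles exchanged yields the optimal map $S=\id-\nabla h^*(\nabla\phi)$ from $\nu$ to $\mu$, well defined $\nu$-a.e.\ when $\nu\ll\mathcal{L}^d$, and the same unique optimal plan, now written $\gamma=(S,\id)_\#\nu$. Identifying the two representations of $\gamma$, its support lies in $\{(x,T(x))\}\cap\{(S(y),y)\}$, whence $S\circ T=\id$ $\mu$-a.e.\ and $T\circ S=\id$ $\nu$-a.e.
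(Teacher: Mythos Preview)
The paper does not actually prove this theorem: it is stated as a classical result from optimal transport, with the authors referring to the textbooks \cite{OTAM}, \cite{VillaniOT}, and \cite{AGS} for details. Your sketch is correct and reproduces the standard argument one finds in those references (weak duality by integration, no-gap via Fenchel--Rockafellar or an equivalent compactness/$c$-conjugacy argument, Arzel\`a--Ascoli on $c$-transforms for the maximizer, complementary slackness plus Rademacher and the invertibility $\nabla h^*=(\nabla h)^{-1}$ for the Brenier map, and symmetry of the radially symmetric cost for the inverse), so there is nothing to compare against in the paper itself.
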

    
    We shall also use the following inequality, popularized under the name of five gradients inequality, first obtained in \cite{BVEstimates} in the case of the quadratic cost, and generalized by the first author in \cite{FiveGradsIneq} (see also \cite{DiMMurRad} for extension to some class of differentiable manifolds). 
    
    \begin{theorem}[Five gradients inequality] \label{thm: five_gradients_inequality}
        For all $\mu,\nu \in W^{1,1}(\Omega) \cap \scr{P}(\Omega)$, and $(\psi,\phi)$ a choice of Kantorovitch potentials between $\mu$ and $\nu$ there hold
        \begin{equation} \label{eq: five_gradients_inequality}
            \int_\Omega \nabla H(\nabla \psi) \cdot \nabla \mu + \int_\Omega \nabla H(\nabla \phi) \cdot \nabla \nu \geq 0
        \end{equation} 
    \end{theorem}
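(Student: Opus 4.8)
The plan is to establish the inequality first in a fully regular setting by a second-order computation, and then recover the stated $W^{1,1}$ version by approximation. Throughout I abbreviate $A := D^2\psi$, $M := D^2 h^*(\nabla\psi)$ and $B := D^2 H(\nabla\psi)$, all evaluated at the running point $x$. Convexity of $H$ gives $B$ positive semidefinite, while strict convexity of $h$ (hence of $h^*$) gives $M$ positive definite. The first step is to mollify $\mu,\nu$ (and, if necessary, regularize $h$ and $H$) so that the densities are smooth and bounded away from $0$ on $\Omega$, the potentials are $C^2$, and the optimal maps $T=\mathrm{id}-\nabla h^*(\nabla\psi)$, $S=\mathrm{id}-\nabla h^*(\nabla\phi)$ are diffeomorphisms with $S\circ T=\mathrm{id}$; the inequality is proven there and the limit is taken at the very end, using the uniform Lipschitz control of Theorem~\ref{thm: Kanto_duality}(2) and stability of optimal plans.

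Next I record the structural identities coming from $c$-conjugacy. The envelope theorem gives $\nabla\psi(x)=\nabla h(x-T(x))$; evaluating the analogous relation for $\phi$ at $y=T(x)$ and using $S\circ T=\mathrm{id}$ yields the antisymmetry $\nabla\psi(x)=-\nabla\phi(T(x))$. Since $H$ is radial, $\nabla H$ is odd and $D^2 H$ is even, whence $D^2H(\nabla\phi(T(x)))=D^2H(\nabla\psi(x))=B$. Differentiating $\nabla\phi\circ T=-\nabla\psi$ and using $DT=I-MA$ gives $D^2\phi(T(x))=-A\,(I-MA)^{-1}$.

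Now I integrate by parts in each term, using $\mathrm{div}(\nabla H(\nabla\psi))=\operatorname{tr}(BA)$, and transport the $\phi$-integral onto $\mathrm{d}\mu$ via $T_\#\mu=\nu$. Combining with the identities above, the two terms collapse to a single integral:
\[
\int_\Omega \nabla H(\nabla\psi)\cdot\nabla\mu+\int_\Omega\nabla H(\nabla\phi)\cdot\nabla\nu=\int_\Omega \operatorname{tr}\!\big(BA\,(I-MA)^{-1}MA\big)\,\mu\,\mathrm{d}x,
\]
so it remains to show the integrand is pointwise nonnegative. Set $N:=M^{1/2}$ (positive definite) and $\tilde A:=NAN$. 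The second-order condition of $c$-concavity, namely that the convex function $h(\,\cdot-T(x))$ touches $\psi$ from above at $x$, gives $A\preceq D^2h(x-T(x))=M^{-1}$, hence $\tilde A\preceq I$ and $I-\tilde A$ is positive semidefinite. Since $I-MA=N(I-\tilde A)N^{-1}$, a cyclic rearrangement of the trace yields
\[
\operatorname{tr}\!\big(BA\,(I-MA)^{-1}MA\big)=\operatorname{tr}\!\Big((I-\tilde A)^{-1}\,(NA)B(NA)^{\top}\Big),
\]
which is the trace of a product of two positive semidefinite matrices, and therefore nonnegative. This closes the argument in the regular setting; note the only ingredients are $B\succeq 0$ (convexity of $H$), $M\succ 0$ (strict convexity of $h^*$), and the $c$-concavity inequality $A\preceq M^{-1}$, so no commutation or radial symmetry of the Hessians of $H$ and $h^*$ is needed here.

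The main obstacle is the reduction to this regular setting. The statement is intended for merely $W^{1,1}$ densities and potentials that are only Lipschitz, so the second-order objects $D^2\psi$, $D^2 h^*$, $D^2 H$ need not exist pointwise and the entire third step must be justified by an approximation that simultaneously preserves positivity of the regularized densities, the antisymmetry and second-order conditions, and the vanishing of the boundary contributions in the integration by parts. Controlling those boundary terms on $\partial\Omega$ — where one must exploit the Neumann-type transport condition forcing $T(x)\in\Omega$ — and passing to the limit under only first-order control of $\mu,\nu$ is where the genuine work lies; by contrast the algebraic heart of the argument is short.
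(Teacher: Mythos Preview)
The paper does not contain a proof of this theorem; it is quoted from \cite{BVEstimates} (quadratic cost) and \cite{FiveGradsIneq} (general cost, by the first author), so there is no in-paper argument to compare against.

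Your core computation is correct: after integrating by parts and transporting the $\phi$-integral via $T_\#\mu=\nu$, the sum becomes $\int_\Omega \operatorname{tr}\!\big(BA(I-MA)^{-1}MA\big)\,\mathrm{d}\mu$ plus boundary terms, and your symmetrization via $N=M^{1/2}$ together with the second-order $c$-concavity bound $A\preceq M^{-1}$ cleanly shows the integrand is nonnegative. One point you undersell: the boundary terms have the \emph{right sign} and may simply be dropped. Since $H$ and $h$ are radial with nonnegative radial derivative, $\nabla H(\nabla\psi(x))$ is a nonnegative multiple of $x-T(x)$; convexity of $\Omega$ and $T(x)\in\overline{\Omega}$ then give $(x-T(x))\cdot n(x)\ge 0$ on $\partial\Omega$, and the same holds for the $\phi$-term.

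The step that does not go through as written is your first one. Mollifying $\mu,\nu$ (and even $h,H$) does \emph{not} by itself produce $C^2$ Kantorovich potentials or diffeomorphic optimal maps: that would require interior regularity theory of Ma--Trudinger--Wang type for the cost $h$, which is not available for a generic strictly convex radial cost in dimension $d\ge 2$. The proofs in \cite{BVEstimates,FiveGradsIneq} avoid this by using that $c$-concave potentials are semi-concave, hence twice differentiable a.e.\ in the Alexandrov sense, and by handling the singular part of the distributional Hessian directly. Your outline is therefore correct in spirit, but the route ``regularize until everything is $C^2$ and pass to the limit'' is not the one that actually closes; replace it by the Alexandrov-a.e.\ argument and the pointwise trace inequality you derived goes through verbatim.
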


\subsection{Functionals over probabilities}

    To prove our theorem, we shall make use of the following functionals on probability measures: entropy, energy, and Fisher's information.

    \begin{definition}[Entropy and Potential Energy] \label{def: entropy_energy}
        For $\rho \in \mathcal{P}(\Omega)$ and $V \in W^{1,\infty}(\Omega)$, we define:
        \begin{enumerate}
            \item The entropy of $\rho$ is defined as $\scr{E}(\rho) := \int_\Omega \rho \log \rho , \mathrm{d}x$ if $\rho \ll \cl{L}^d$, and $+\infty$ otherwise.
            \item The potential energy of $\rho$ is defined as $\scr{V}(\rho) := \int_\Omega V , \mathrm{d}\rho$.
            \item The total energy is defined as $\scr{F}(\rho) := \scr{E}(\rho) + \scr{V}(\rho)$.
        \end{enumerate}
    \end{definition}
    
    \begin{definition}[Generalized Fisher's information] \label{def : Fisher_information}
        Let $\rho$ be an absolutely continuous probability measure on $\Omega$ with density. For $\rho > 0$, define the pressure variable $u_\rho := \log \rho + V$. If $u_\rho \in W^{1,\infty}(\Omega)$ and $G : \bb{R}^d \to [0,+\infty)$ is a convex function, we define the $G$-Fisher information by
        \begin{equation} \label{eq: Fisher_information}
            \scr{I}_G(\rho) := \int_\Omega G(\nabla u_\rho) \dd{\rho}
        \end{equation}
    \end{definition}

\subsection{One-Step JKO Scheme}

    The one-step JKO scheme is defined by the the variational problem 
    \begin{equation} \label{eq: one_step_JKO}
        \min_{\rho \in \cl{P}(\Omega)} \scr{F}(\rho) + \scr{T}_h(\rho,\mu) 
    \end{equation}
    for a given reference measure $\mu \in \cl{P}(\Omega)$, where $\scr{F}$ is the total energy defined above (Definition~\ref{def: entropy_energy}). 
    
    \begin{remark}
        TWhile the JKO scheme typically includes a time-step parameter $\tau$, our analysis absorbs it into $h$, as no limiting behavior is required. This parameter will nonetheless reappears in the computations in the quadratic and $p$-power case.  
    \end{remark}

    The following proposition is classical, and most of its ingredients can be found in standard references in the theory.
    
    \begin{theorem} \label{thm: one_step_JKO}
        For any $\mu \in \scr{P}(\Omega)$, the problem
        \begin{equation} \label{eq: one_step_JKO_2}
            \argmin_{\rho \in \scr{P}(\Omega)} \scr{F}(\rho) + \scr{T}_h(\rho,\mu) 
        \end{equation}
        admits a unique solution $\rho$, which we shall also denote by $Q[\mu]$. Furthermore 
        \begin{enumerate}
            \item There exists $\delta > 0$ depending only on $V,h$, and $\Omega$ such that 
            \begin{equation*} \delta \leq \rho \leq \frac{1}{\delta} \end{equation*}
            \item If $(\psi,\phi)$ is a pair of Kantorovitch potentials from $\rho$ to $\mu$, then the optimality condition reads as $\log \rho + V = -\psi + C$ a.e. for some constant $C$ (which can always be chosen to be $0$ up to translation of the potentials). 
            \item In particular, $\log Q[\mu]$ and $Q[\mu]$ are both Lipschitz, with constant depending only on $V,h$, and $\Omega$. 
        \end{enumerate}   
    \end{theorem}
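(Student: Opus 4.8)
The plan is to run the direct method for existence, use strict convexity for uniqueness, extract the pointwise identity $\log\rho+V=-\psi+C$ from the Euler--Lagrange equation, and then read off the bounds and the Lipschitz regularity from this identity together with the uniform Lipschitz estimate for Kantorovich potentials.

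First I would check that $J(\rho):=\scr{F}(\rho)+\scr{T}_h(\rho,\mu)$ is bounded below and not identically $+\infty$ on $\scr{P}(\Omega)$: since $\Omega$ is bounded, Jensen's inequality applied to the probability measure $\dd{x}/|\Omega|$ gives $\scr{E}(\rho)\ge-\log|\Omega|$, while $\scr{V}(\rho)\ge\inf_\Omega V>-\infty$ as $V\in W^{1,\infty}(\Omega)$ and $\scr{T}_h\ge0$, and $J$ is finite at the normalized Lebesgue measure. As $\overline\Omega$ is compact, $\scr{P}(\Omega)$ is narrowly compact, and each term of $J$ is narrowly lower semicontinuous --- classically for $\scr{E}$, by continuity of the bounded function $V$ for $\scr{V}$, and by the dual formula \eqref{eq: Kanto_duality} for $\scr{T}_h(\cdot,\mu)$ --- so a minimizer exists. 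For uniqueness I would use that $\rho\mapsto\scr{T}_h(\rho,\mu)$ is convex (interpolate the optimal transport plans), $\scr{V}$ is linear, and $\scr{E}$ is strictly convex along linear interpolations of absolutely continuous measures; since the minimal value is finite any minimizer has finite energy, so two distinct minimizers would violate strict convexity of $J$ at their midpoint.

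Next I would derive the optimality condition by perturbing the minimizer $\rho$ as $\rho_\varepsilon:=(1-\varepsilon)\rho+\varepsilon\nu$ with $\nu\in\scr{P}(\Omega)\cap L^\infty$. The entropy difference quotient is monotone in $\varepsilon$ and converges to $\int(1+\log\rho)\dd{(\nu-\rho)}\in[-\infty,+\infty)$, the potential term is linear in $\varepsilon$, and, sandwiching $\scr{T}_h(\rho_\varepsilon,\mu)$ between $\int\psi\dd{\rho_\varepsilon}+\int\psi^c\dd{\mu}$ and an expression controlled by $\int\psi_\varepsilon\dd{(\nu-\rho)}$ built from the optimal pair at $\rho_\varepsilon$, then using the uniform Lipschitz bound of Theorem~\ref{thm: Kanto_duality} together with Arzel\`a--Ascoli to pass $\psi_\varepsilon$ to a Kantorovich potential $\psi$ between $\rho$ and $\mu$, the transport difference quotient is seen to have limit $\int\psi\dd{(\nu-\rho)}$. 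Optimality forces the limiting sum to be $\ge0$; taking $\nu$ charging $\{\rho=0\}$ would send the entropy term to $-\infty$ with everything else bounded, hence $\rho>0$ a.e., and then the first-order condition for the convex problem under the constraint $\int\rho=1$ gives $1+\log\rho+V+\psi=\lambda$ a.e.; replacing $\psi$ by $\psi+1-\lambda$ --- still a Kantorovich potential --- yields item~(2), that is $\log\rho+V=-\psi$ a.e.\ after the stated normalization.

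Finally, from $\log\rho=-V-\psi$, the fact that $\psi$ is Lipschitz with a constant depending only on $h$ and $\Omega$ (Theorem~\ref{thm: Kanto_duality}(2)) and $V\in W^{1,\infty}(\Omega)$ gives that $\log\rho$ is Lipschitz with a constant $L_1=L_1(V,h,\Omega)$, hence has oscillation at most $K:=L_1\operatorname{diam}(\Omega)$ on $\Omega$; the normalization $\int_\Omega e^{-V-\psi}\dd{x}=1$ then forces $e^{-K}/|\Omega|\le\rho\le e^{K}/|\Omega|$ by the elementary comparison of $\max$ and $\min$, which is item~(1) with $\delta=\delta(V,h,\Omega)$. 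Item~(3) then follows since $\log Q[\mu]=-V-\psi$ is $L_1$-Lipschitz and $Q[\mu]=e^{-V-\psi}$ is in turn Lipschitz with constant $L_1/\delta$, all constants depending only on $V,h,\Omega$. I expect the main difficulty to be the rigorous first-variation computation for $\scr{T}_h(\cdot,\mu)$ and the justification that a single Kantorovich potential $\psi$ can be used in the optimality condition independently of the perturbation $\nu$ --- this is precisely what the uniform Lipschitz estimate for the potentials, their stability under narrow convergence of the marginals, and the convexity of the transport term are needed for; once $\log\rho+V=-\psi$ is in hand, the remaining assertions are routine.
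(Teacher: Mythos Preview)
Your argument is correct and essentially parallels the paper's, but you organize the logic differently in one notable respect: the paper obtains the two-sided bound $\delta\le\rho\le\delta^{-1}$ \emph{before} the optimality condition, by invoking the Inada condition $f'(0^+)=-\infty$, $f'(+\infty)=+\infty$ for $f(z)=z\log z$ via a Blanchet--Carlier type argument, and only afterwards cites the optimality condition from the literature. You instead derive the Euler--Lagrange identity $\log\rho+V=-\psi+C$ first, and then read off the uniform bounds directly from it using the Lipschitz control on $\psi$ and $V$ together with the normalization $\int\rho=1$. Your route is more self-contained and economical here, since it avoids the separate Inada argument entirely; the paper's route, on the other hand, would generalize more readily to other internal energies satisfying the Inada condition, where the explicit formula $\log\rho=-V-\psi+C$ is not available. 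The first-variation computation for $\scr{T}_h$ that you flag as the main technical point is indeed the standard one (and is what the paper defers to \cite{OTAM}, Chapter~7); your sketch via the sandwich $\int\psi\,\dd(\rho_\varepsilon-\rho)\le\scr{T}_h(\rho_\varepsilon,\mu)-\scr{T}_h(\rho,\mu)\le\int\psi_\varepsilon\,\dd(\rho_\varepsilon-\rho)$ and Arzel\`a--Ascoli is the right mechanism, and uniqueness of Kantorovich potentials up to additive constants (available since $\rho\ll\mathcal{L}^d$ and $h$ is strictly convex) resolves the concern about dependence of the limit on $\nu$.
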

    
    \begin{proof}
        The existence of a solution to the problem can be proved using the direct method in the calculus of variations; indeed, the considered functionals are lower semi-continuous on $\scr{P}(\Omega)$, which is compact (as $\Omega$ is bounded; see, for example, \cite{OTAM}, Chapter 7). $\scr{F}$ is also strictly convex (again, see \cite{OTAM}, Chapter 7), so that the solution is unique. A very slight adaptation of the proof in \cite{BlanchetCarlier} shows that there exists some $\delta > 0$, which depends on $V$, $h$, and $\Omega$, such that $\delta \leq Q[\mu] \leq \frac{1}{\delta}$. This comes from the fact that the function $z \to z \log z$ satisfies the so-called \emph{Inada condition}:
        \begin{align*}
            \lim_{z\xrightarrow{}0^+} f'(z)= -\infty \; \; \; \; \textnormal{and} \; \; \; \; \lim_{z \xrightarrow{}+ \infty} f'(z)= + \infty.
        \end{align*}
        The derivation of optimality conditions can be found again in \cite{OTAM}, Chapter 7 and directly gives the existence of a Kantorovich potential $\psi$ and a constant $C$ such that
        \begin{align*}
            \log Q[\mu] + V = -\psi + C.
        \end{align*}
        This, in turn, implies that $Q[\mu]$ is Lipschitz, as $V$ and $\psi$ are, and that $Q[\mu]$ is bounded away from $0$ and $+\infty$.
    \end{proof}

\subsection{Modulus of convexity and monotonicity}

    We now give the precise definition of the modulus of convexity and monotonicity, which will be used in the proof of our theorem.

    \begin{definition}[Modulus of monotonicity and convexity]
    \label{def: modulus}
        Let $\omega,\sigma : \bb{R}^d \to [0,+\infty)$
        \begin{enumerate}
            \item We say that $\omega$ is a \emph{modulus of monotonicity} of $\nabla V$ if, for all $x,y \in \Omega$, we have
            \begin{equation*}
                (\nabla V(x) - \nabla V(y)) \cdot (x-y) \geq \omega(x-y)
            \end{equation*}
            \item We say that $\sigma$ is a \emph{modulus of convexity} of $V$ if, for any $x,y \in \Omega$, we have 
            \begin{equation*}
                V(x) \geq V(y) + \nabla V(y) \cdot (x-y) + \sigma(x-y)
            \end{equation*}
        \end{enumerate}
    \end{definition}
    
    For simplicity, we shall call $\omega$ modulus of monotonicity of $V$ instead of $\nabla V$. The modulus of monotonicity measures the monotonicity of $\nabla V$ as a monotone vector-valued map, and can be defined for any such map. On the other hand, the modulus of convexity captures how much the graph of $V$ deviates from its supporting hyperplanes. Notice that the modulus of convexity can also be seen as a translation-invariant lower bound on the Bregman divergence of $V$, defined by $D_V(x:y) := V(x) - V(y) - \nabla V(y) \cdot (x-y)$.
    
    It is not hard to see that if $\sigma$ is a modulus of convexity for $V$, then $2 \sigma$ is a modulus of monotonicity; similarly, if $\omega$ is a modulus of monotonicity, then $\int_0^1 t^{-1} \omega(t \cdot) \dd{t}$ is a modulus of convexity. In general, there is no reason to expect those inequalities to be equality. This is the case, for instance, for the function $\frac{1}{p} |\cdot|^p$ with $p \geq 2$ (see Lemma \ref{lemma: modulus_power_p}).

\section{Proof of theorem \ref{thm: log_Sobolev}} 

    First we give a rough overview of the strategy of the proof.
    \begin{enumerate}
        \item Using an improvement of geodesic convexity, we derive a one-step dissipation of entropy along the JKO scheme, involving Fisher-information-like terms.
        \item Similarly, using the five-gradients inequality, we show a one-step dissipation of Fisher information along the JKO scheme.
        \item We then compare these two dissipation inequalities along an iteration of the flow, and identify conditions to obtain a difference inequality between the entropy and the Fisher information.
        \item Finally, we prove that the JKO iteration converges to $\eta$, which will conclude our proof.
    \end{enumerate}

\subsection{One-Step dissipation of entropy}

    The first step is to show a precise result on the dissipation of entropy for the one-step JKO scheme. This is based on the following improved geodesic convexity of the energy functional along Wasserstein geodesics. We shall that this inequality was already present in \cite{IneqGeneEntOptTranspo} in the particular case when $\sigma = \alpha_0 h$ for some $\alpha_0 > 0$.
    
    \begin{proposition}[Improved geodesic convexity] \label{prop: improved_geo_convexity}
        Let $\rho,\mu \in \scr{P}^a(\Omega)$, suppose that $\mu > 0$ and that $u_\mu := \log \mu + V$ belong to $W^{1,\infty}(\Omega)$. Let $T$ be the optimal transport map from $\mu$ to $\rho$. Then if $\sigma$ is a modulus of convexity for $V$ we have 
        \begin{equation} \label{eq: convexity_bound}
            \scr{F}(\rho) \geq \scr{F}(\mu) + \int_\Omega \nabla u_\mu \cdot (\id -T) \dd{\mu} + \int_\Omega \sigma(\id -T) \dd{\mu}
        \end{equation}
    \end{proposition}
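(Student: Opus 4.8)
The plan is to decompose $\scr{F}(\rho) - \scr{F}(\mu)$ into the entropy part and the potential part, and to treat each via the optimal transport map $T$ pushing $\mu$ onto $\rho$. Since $T_\#\mu = \rho$, the potential energy term is immediate: $\scr{V}(\rho) - \scr{V}(\mu) = \int_\Omega (V\circ T - V)\dd{\mu}$, and by the definition of the modulus of convexity applied with $x = T(x)$ and $y = x$ we get
\begin{equation*}
    V(T(x)) - V(x) \geq \nabla V(x)\cdot(T(x)-x) + \sigma(T(x)-x),
\end{equation*}
so that $\scr{V}(\rho) - \scr{V}(\mu) \geq \int_\Omega \nabla V\cdot(T-\id)\dd{\mu} + \int_\Omega \sigma(T-\id)\dd{\mu}$. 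Equivalently $\scr{V}(\rho)-\scr{V}(\mu) \geq -\int_\Omega \nabla V\cdot(\id-T)\dd{\mu} + \int_\Omega \sigma(\id-T)\dd{\mu}$, using that $\sigma$ is radial (so $\sigma(T-\id)=\sigma(\id-T)$) — although in fact radial symmetry is not even needed here since $\sigma$ is evaluated at the same argument.

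Next I would handle the entropy. The classical displacement-convexity computation gives, writing $\rho = T_\#\mu$ with $T(x) = x - \nabla h^*(\nabla\psi(x))$ the optimal map, that along the (generalized) displacement interpolation the entropy is convex when $h$ is such that McCann's condition holds; but here we only need the one-sided inequality comparing the endpoints, which follows from the change-of-variables formula $\rho(T(x))\det(DT(x)) = \mu(x)$ and concavity of $\log\det$ on positive matrices (or, more robustly, from the fact that $\scr{E}$ is displacement convex along the geodesic from $\mu$ to $\rho$ together with its first-variation at $\mu$). Concretely, displacement convexity of $\scr{E}$ yields
\begin{equation*}
    \scr{E}(\rho) \geq \scr{E}(\mu) + \left.\dv{t}\right|_{t=0^+}\scr{E}((\id + t(T-\id))_\#\mu) = \scr{E}(\mu) - \int_\Omega \nabla(\log\mu)\cdot(T-\id)\dd{\mu},
\end{equation*}
where the first variation of the entropy in the direction of the vector field $T-\id$ is the standard formula $-\int \nabla\log\mu\cdot(T-\id)\dd{\mu}$ (valid since $\log\mu\in W^{1,\infty}$, $\mu>0$ bounded, and $\Omega$ bounded so all integrals converge and the boundary terms vanish because $T$ maps $\Omega$ into $\Omega$). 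Rewriting, $\scr{E}(\rho) - \scr{E}(\mu) \geq -\int_\Omega \nabla\log\mu\cdot(\id-T)\dd{\mu}\cdot(-1)$... more carefully: $\scr{E}(\rho)-\scr{E}(\mu) \geq \int_\Omega \nabla\log\mu\cdot(\id-T)\dd{\mu}$ is the wrong sign, so let me record it as $\scr{E}(\rho)-\scr{E}(\mu) \geq -\int_\Omega \nabla\log\mu\cdot(T-\id)\dd{\mu} = \int_\Omega \nabla\log\mu\cdot(\id-T)\dd{\mu}$.

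Adding the entropy and potential estimates and using $u_\mu = \log\mu + V$:
\begin{equation*}
    \scr{F}(\rho) - \scr{F}(\mu) \geq \int_\Omega (\nabla\log\mu + \nabla V)\cdot(\id-T)\dd{\mu} + \int_\Omega \sigma(\id-T)\dd{\mu} = \int_\Omega \nabla u_\mu\cdot(\id-T)\dd{\mu} + \int_\Omega \sigma(\id-T)\dd{\mu},
\end{equation*}
which is exactly \eqref{eq: convexity_bound}. The main obstacle is making the displacement-convexity argument for the entropy rigorous with only a one-sided inequality and minimal regularity: one must justify the first-variation formula and the convexity of $t\mapsto\scr{E}(\mu_t)$ along the generalized ($h$-cost) interpolation $\mu_t := ((1-t)\id + tT)_\#\mu$. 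For this I would either invoke McCann's displacement convexity theorem in the form that $\scr{E}$ is convex along such interpolations whenever the cost is, e.g., of the form $h(x-y)$ with $h$ convex (which is our standing assumption) — citing \cite{McCannCondition} or \cite{AGS} — or, more elementarily, use the pointwise Jacobian inequality: since $T = \id - \nabla h^*(\nabla\psi)$ with $\psi$ $h$-concave, $DT(x)$ is diagonalizable with positive eigenvalues a.e., and $s\mapsto\log\det((1-s)I + sDT)$ is concave, which gives $\scr{E}(\rho)-\scr{E}(\mu)\geq -\int_\Omega\operatorname{tr}(DT-I)\,\dd{x}$-type bounds that integrate by parts (using $\delta\leq\mu\leq1/\delta$ and $\log\mu$ Lipschitz, Theorem~\ref{thm: one_step_JKO}) to the stated linear term. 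Since the bounds in Theorem~\ref{thm: one_step_JKO} guarantee all densities are bounded above and below and the potentials are Lipschitz, these integrations by parts produce no boundary contributions, and the proof goes through.
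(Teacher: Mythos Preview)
Your approach is essentially identical to the paper's: decompose $\scr{F}$ into entropy plus potential, treat the potential by integrating the pointwise modulus-of-convexity inequality against $(\id,T)_\#\mu$, and treat the entropy via displacement convexity along $\mu_t=((1-t)\id+tT)_\#\mu$, with the rigorous justification of the first-variation formula deferred to the literature (the paper cites Agueh \cite{Agueh} rather than spelling out the Jacobian argument you sketch).

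The one issue is your sign bookkeeping. The first variation of the entropy is
\[
\left.\dv{t}\right|_{t=0}\scr{E}(\mu_t)=+\int_\Omega\nabla\log\mu\cdot(T-\id)\,\dd\mu,
\]
not minus (differentiate $\scr{E}(\mu_t)=\scr{E}(\mu)-\int_\Omega\mu\log\det DT_t$, or use the continuity equation as the paper does). Hence displacement convexity gives $\scr{E}(\rho)-\scr{E}(\mu)\geq\int\nabla\log\mu\cdot(T-\id)\,\dd\mu$, the opposite of what you record. Your potential inequality is correct, $\scr{V}(\rho)-\scr{V}(\mu)\geq\int\nabla V\cdot(T-\id)\,\dd\mu+\int\sigma(T-\id)\,\dd\mu$, so the two pieces add coherently to $\int\nabla u_\mu\cdot(T-\id)\,\dd\mu+\int\sigma(T-\id)\,\dd\mu$; but as written you combine $+(\id-T)$ from entropy with $-(\id-T)$ from potential and then claim the sum is $+(\id-T)$, which does not add up. (The paper's own listed sub-inequalities and the statement of the proposition carry the same $(\id-T)$ versus $(T-\id)$ slip, so you are reproducing its typo rather than catching it.) Finally, the uniform bounds of Theorem~\ref{thm: one_step_JKO} apply to $Q[\mu]$, not to the generic $\mu$ of this proposition; the regularity you need here is already in the hypotheses $\mu>0$ and $u_\mu\in W^{1,\infty}$.
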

    
    \begin{proof} 
        The proof splits into inequalities for the entropy and potential energy.:
        \begin{align*} 
            &\scr{E}(\rho) \geq \scr{E}(\mu) + \int_\Omega \nabla \log \mu \cdot (\id - T) \dd{\mu} \\
            &\scr{V}(\rho) \geq \scr{V}(\mu) + \int_{\Omega} \nabla V \cdot (\id  - T) \dd{\mu} + \int_\Omega \sigma_V(\id  - T) \dd{\mu} 
        \end{align*}
        \begin{enumerate}
            \item The first inequality is classical and follows from the geodesic convexity: formally, if we define the "geodesic interpolation" between $\mu$ and $\rho$ given by $\rho_t := (T_t)_\# \mu$ where $T_t := ((1-t) \id  + t T)$, then $t \to \scr{E}(\rho_t)$ is convex, hence, we have
            \begin{equation*} 
                \scr{E}(\rho) \geq \scr{E}(\mu) + \dv{t}_{|t=0} \scr{E}(\rho_t) 
            \end{equation*}
            Using the fact that $\rho_t$ weakly solves the continuity equation
            \begin{equation*} 
                \partial_t \rho_t + \nabla \cdot (\rho_t v_t) = 0 
            \end{equation*}
            with $v_t = T_t^{-1}(\id - T)$. We then have
            \begin{equation*} 
                \dv{t}_{|t=0} \scr{E}(\rho_t) = \int_\Omega v_0 \cdot \nabla \log \rho_0 \dd{\rho_0} = \int_\Omega \nabla \log \mu \cdot (T-\id ) \dd{\mu} 
            \end{equation*}
            The above computations are formal but can be rigorously justified, this is done for example in Section 2.4 of \cite{Agueh} under some regularity conditions on $\mu,\rho$ and on the cost $h$, which are satisfied here. 
            \item To prove the second inequality, we consider the optimal transport plan $\gamma = (\id ,T)_\# g$ between $g$ and $\rho$, and integrate the inequality $V(x) \geq V(y) + \nabla V(y) \cdot (x-y) + \sigma(x-y)$ against $\gamma$ to conclude. 
        \end{enumerate}
        
    \end{proof}

    \begin{remark}
        This is where our proof starts to differ from Cordero–Gangbo–Houdré's proof \cite{IneqGeneEntOptTranspo}. They directly use the previous inequality for a generic $\mu$ and $\rho = \eta$, combined with Jensen's inequality to conclude.
    \end{remark}
    
    As a consequence we obtain the following dissipation entropy along one step of the JKO scheme.
    
    \begin{proposition}[Dissipation of entropy] \label{prop: dissipation_entropy}
        Suppose $\mu > 0$ is such that $u_\mu \in W^{1,\infty}(\Omega)$. Let $\rho = Q[\mu]$. Let $L$ be any continuous convex function. Then if $V$ admits $\sigma$ as modulus of convexity, we have
        \begin{equation} \label{eq: dissipation_entropy} 
            \scr{F}(\rho) + \int_\Omega L(\nabla u_\mu) \dd{\mu} + \int_\Omega L^*(\nabla h^*(\nabla u_\rho)) \dd{\rho} \geq \scr{F}(\mu) + \int_\Omega \sigma(\nabla h^*(\nabla u_\rho)) \dd{\rho} 
        \end{equation}
    \end{proposition}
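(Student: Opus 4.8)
The plan is to combine the improved geodesic convexity (Proposition~\ref{prop: improved_geo_convexity}) with the Legendre--Fenchel (Young) inequality, exploiting the explicit form of the optimal transport map given by the JKO optimality conditions. First I would invoke Proposition~\ref{prop: improved_geo_convexity} with the roles as follows: $\mu$ is the given measure and $\rho = Q[\mu]$ is the JKO step. Let $T$ be the optimal transport map from $\mu$ to $\rho$, so that Theorem~\ref{thm: Kanto_duality} and the strict convexity of $h$ give $T = \id - \nabla h^*(\nabla \psi)$, where $(\psi,\phi)$ are Kantorovich potentials between $\mu$ and $\rho$. Then Proposition~\ref{prop: improved_geo_convexity} reads
\begin{equation*}
    \scr{F}(\rho) \geq \scr{F}(\mu) + \int_\Omega \nabla u_\mu \cdot (\id - T) \dd{\mu} + \int_\Omega \sigma(\id - T) \dd{\mu}.
\end{equation*}

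The key algebraic step is to rewrite both remaining integrals on the right-hand side in terms of $\rho$ rather than $\mu$, using the pushforward $\rho = T_\#\mu$ and the optimality condition of Theorem~\ref{thm: one_step_JKO}, which states that (up to a harmless translation of the potentials) $u_\rho = \log \rho + V = -\phi$ on the $\rho$-side, hence $\nabla u_\rho(T(x)) = -\nabla \phi(T(x))$ $\mu$-a.e. By Theorem~\ref{thm: Kanto_duality}(4) the inverse map $S = \id - \nabla h^*(\nabla \phi)$ satisfies $S \circ T = \id$ $\mu$-a.e., so $\id - T$, when transported to the $\rho$-variable, equals $\nabla h^*(\nabla \phi) = \nabla h^*(-\nabla u_\rho)$; since $h$ (hence $h^*$) is radially symmetric this is $-\nabla h^*(\nabla u_\rho)$ in magnitude, and after the change of variables $\int_\Omega \sigma(\id - T)\dd\mu = \int_\Omega \sigma(\nabla h^*(\nabla u_\rho))\dd\rho$, producing exactly the $\sigma$-term in the claimed inequality. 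For the cross term $\int_\Omega \nabla u_\mu \cdot (\id - T)\dd\mu$, I would bound it from below using Young's inequality: for any convex $L$ and any vectors $a,b$ one has $a \cdot b \geq -L(a) - L^*(-b)$, equivalently $-a\cdot b \le L(a) + L^*(-b)$. Applying this with $a = \nabla u_\mu(x)$ and $b = \id - T(x)$ gives
\begin{equation*}
    \int_\Omega \nabla u_\mu \cdot (\id - T)\dd\mu \geq -\int_\Omega L(\nabla u_\mu)\dd\mu - \int_\Omega L^*\big(T - \id\big)\dd\mu,
\end{equation*}
and the same change of variables as above turns $\int_\Omega L^*(T - \id)\dd\mu$ into $\int_\Omega L^*(\nabla h^*(\nabla u_\rho))\dd\rho$ (again using radial symmetry to absorb signs into $L^*$, which is also radially symmetric). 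Rearranging the three displays yields precisely \eqref{eq: dissipation_entropy}.

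The main obstacle I anticipate is bookkeeping the identification $\id - T = \nabla h^*(\nabla u_\rho)$ under the pushforward together with the sign conventions: one must be careful that the map appearing in Proposition~\ref{prop: improved_geo_convexity} is the transport from $\mu$ to $\rho$, that the optimality condition in Theorem~\ref{thm: one_step_JKO} is phrased with the potential from $\rho$ to $\mu$, and that composing with the inverse map via Theorem~\ref{thm: Kanto_duality}(4) introduces the correct displacement. All the regularity needed to legitimize the change of variables and to apply Proposition~\ref{prop: improved_geo_convexity} (namely $u_\mu \in W^{1,\infty}$, Lipschitz potentials, density bounds on $\rho$) is furnished by the hypothesis on $\mu$ and by Theorem~\ref{thm: one_step_JKO}. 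Once the identification is in place, the rest is a one-line application of Young's inequality, so the substance of the proof is entirely in this change-of-variables step.
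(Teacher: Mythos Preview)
Your proposal is correct and follows essentially the same route as the paper: apply the improved geodesic convexity inequality, bound the cross term $\int \nabla u_\mu\cdot(\id-T)\,\dd\mu$ via the Fenchel--Young inequality, and then push everything forward to the $\rho$-side using $S\circ T=\id$ together with the JKO optimality condition $T-\id=\nabla h^*(\nabla u_\rho)$ (in the paper's labeling of maps). The only small caveat is your appeal to radial symmetry of $L^*$ to absorb a sign, which is not guaranteed by the stated hypotheses on $L$; the paper's own proof has the same implicit sign convention, and in all applications $L$ is taken radial, so this is a bookkeeping detail rather than a gap.
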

    
    \begin{proof} 
        We consider $(\phi,\psi)$ Kantorovitch potentials from $\rho$ to $\mu$, $T$ the associated transport map and $S$ the transport map from $\mu$ to $\rho$. From the previous proposition (\ref{prop: improved_geo_convexity}), we have
        \begin{equation*} 
            \scr{F}(\rho) \geq \scr{F}(\mu) + \int_\Omega \nabla u_\mu \cdot (\id -S) \dd{\mu} + \int_\Omega \sigma(\id -S) \dd{\mu}
        \end{equation*}
        By Jensen’s inequality, the second term is bounded below by
        \begin{equation*} 
            -\int_\Omega L(\nabla u_\mu) \dd{\mu} - \int_\Omega L^*(\id  - S) \dd{\mu} 
        \end{equation*}
        We can then treat the $L^*$ and $\sigma$ terms simultaneously. Using that $g = T_\# \rho$, that $S \circ T = \id $ $\rho$-a.e. and the optimality conditions which give $T - \id  = \nabla h^*(\nabla u_\rho)$ (as $\nabla h^*$ is odd) to deduce
        \begin{align*}
            \int_\Omega [\sigma(\id  - S) - L^*(\id  - S)] \dd{\mu} &= \int_\Omega [\sigma(T - \id) - L^*(T-\id )] \dd{\rho} \\ &= \int_\Omega [\sigma(\nabla h^*(\nabla u_\rho)) - L^*(\nabla h^*(\nabla u_\rho))] \dd{\rho} 
        \end{align*}
        and we conclude.
    \end{proof}

    \begin{remark}
        Notice that by using the flow interchange technic \cite{flowinterchange} (with respect to the flow $\partial_t \rho_t = \nabla \cdot [\rho_t \nabla h^*(\nabla \log \rho_t + \nabla V)]$), we could obtain
        \begin{equation}
            \scr{F}[\mu] \geq \scr{F}[\rho] + \int_\Omega \nabla [\log \rho + V] \cdot \nabla h^*(\nabla [\log \rho + V]) \dd{x}
        \end{equation}
        which is in the reverse order compared to \ref{eq: dissipation_entropy}. As it takes the form $\dv{t} \scr{F}[\rho_t] \leq -\scr{I}[\rho_t]$, it has the wrong sign and hence can't be exploited in our proof. 
    \end{remark}
        
\subsection{One-Step dissipation of Fisher's information}

    From now on we extend the value of $\nabla H$ at $z=0$ by setting $\nabla H(0) = 0$. We note that, as $H,h$ are both radially symmetric of class $C^1(\bb{R}^d \setminus \{ 0 \})$, $\nabla H$ and $\nabla h^*$ are parallel, and $\nabla h^*(z) \neq 0$ for all $z \neq 0$. Therefore we can consider the minimal $\alpha(z) \geq 0$ satisfying $\nabla H(z) = \alpha(z) \nabla h^*(z)$, which is given by $\alpha(z) = \frac{|\nabla H(z)|}{|\nabla h^*(z)|}$ for $z \neq 0$, and $\alpha(0) = 0$. We then have the following dissipation of the Fisher's information. 

    \begin{proposition}[One-Step Dissipation of Fisher's Information] \label{prop: dissipation_information}
        Let $\mu > 0$ with $u_\mu \in W^{1,\infty}(\Omega)$, and $\rho = Q[\mu]$ (so that $\rho$ is absolutely continuous, with $\rho > 0$ and $u_\rho \in W^{1,\infty}(\Omega)$). Then if $V$ admits $\omega$ as a modulus of monotonicity, we have the following dissipation inequality
        \begin{equation} \label{eq: dissipation_information}
            \scr{I}_H(\mu) \geq \scr{I}_H(\rho) + \int_\Omega \alpha(\nabla u_\rho) \omega(\nabla h^*(\nabla u_\rho)) \dd{\rho} 
        \end{equation}
    \end{proposition}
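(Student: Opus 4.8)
## Proof Proposal for Proposition~\ref{prop: dissipation_information}

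\textbf{Overall strategy.} The plan is to mimic the second half of the Bakry--Émery argument at the discrete level: the quantity $\scr{I}_H(\rho) = \int_\Omega H(\nabla u_\rho)\dd{\rho}$ should decrease by at least one JKO step, and the defect should be controlled by the modulus of monotonicity $\omega$. The main tool is the five gradients inequality (Theorem~\ref{thm: five_gradients_inequality}), which is exactly what makes the $H$-Fisher information behave well under the optimal transport step. First I would rewrite $\scr{I}_H(\rho)$ using the optimality condition for the JKO scheme: by Theorem~\ref{thm: one_step_JKO}(2), $u_\rho = \log\rho + V = -\psi$ (up to the additive constant, which we may take to be $0$), where $(\psi,\phi)$ are Kantorovitch potentials from $\rho$ to $\mu$. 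Hence $\nabla u_\rho = -\nabla\psi$ and, since $H$ is radially symmetric and $\nabla H$ is odd, $\scr{I}_H(\rho) = \int_\Omega H(\nabla\psi)\dd{\rho}$.

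\textbf{Key steps.} The five gradients inequality applied to the pair $(\psi,\phi)$ of Kantorovitch potentials between $\rho$ and $\mu$ gives
\begin{equation*}
    \int_\Omega \nabla H(\nabla\psi)\cdot\nabla\rho + \int_\Omega \nabla H(\nabla\phi)\cdot\nabla\mu \geq 0.
\end{equation*}
I would integrate the first term by parts, writing $\nabla H(\nabla\psi)\cdot\nabla\rho = \nabla\cdot(H(\nabla\psi)) \,\rho - [\text{second-order term}]\,\rho$; more precisely, since $\nabla u_\rho = -\nabla\psi$ one has $\nabla\rho = \rho\nabla\log\rho = \rho(\nabla u_\rho - \nabla V) = -\rho(\nabla\psi + \nabla V)$, so that
\begin{equation*}
    \int_\Omega \nabla H(\nabla\psi)\cdot\nabla\rho = -\int_\Omega \nabla H(\nabla\psi)\cdot(\nabla\psi + \nabla V)\,\rho.
\end{equation*}
Since $H$ is convex with $H(0)=0$, we have $\nabla H(z)\cdot z \geq H(z)$ (by convexity, $H(0) \geq H(z) + \nabla H(z)\cdot(0-z)$). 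Thus $\nabla H(\nabla\psi)\cdot\nabla\psi \geq H(\nabla\psi)$, and the $\nabla V$ contribution is $-\int \nabla H(\nabla\psi)\cdot\nabla V\,\rho$. Writing $\nabla H(\nabla\psi) = \alpha(\nabla\psi)\nabla h^*(\nabla\psi) = -\alpha(\nabla u_\rho)\nabla h^*(\nabla u_\rho)$ by definition of $\alpha$ and oddness of $\nabla h^*$, and recalling $T - \id = \nabla h^*(\nabla u_\rho)$ is the displacement of the optimal map from $\rho$ to $\mu$, I expect the $\nabla V$ term to pair up with the analogous term coming from the $\phi$-integral. Indeed, on the $\mu$-side one uses $\nabla u_\mu = \log\mu + V$ need not be $-\nabla\phi$, but $\phi$ is $c$-conjugate to $\psi$ and the optimal map $T$ from $\rho$ to $\mu$ satisfies $T = \id - \nabla h^*(\nabla\psi)$; pushing the first integral forward by $T$ and using $S\circ T = \id$ (Theorem~\ref{thm: Kanto_duality}(4)) converts $\rho$-integrals of functions of $\nabla\psi$ into $\mu$-integrals. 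The key algebraic point is that $\nabla H(\nabla\psi)\cdot\nabla V(x) - \nabla H(\nabla\phi)\cdot\nabla V(T(x))$, after using that $\nabla H$ and $\nabla h^*$ are parallel and the displacement is $x - T(x) = \nabla h^*(\nabla\psi(x))$, becomes $\alpha(\nabla u_\rho(x))\,(\nabla V(x) - \nabla V(T(x)))\cdot(x - T(x))$, which is bounded below by $\alpha(\nabla u_\rho)\,\omega(x - T(x)) = \alpha(\nabla u_\rho)\,\omega(\nabla h^*(\nabla u_\rho))$ by the definition of the modulus of monotonicity. Assembling these pieces, the five gradients inequality should rearrange into exactly $\scr{I}_H(\mu) \geq \scr{I}_H(\rho) + \int_\Omega \alpha(\nabla u_\rho)\,\omega(\nabla h^*(\nabla u_\rho))\dd{\rho}$.

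\textbf{Main obstacle and technical points.} The delicate step is the integration by parts combined with the change of variables $T_\#\rho = \mu$: one must identify $\int_\Omega \nabla H(\nabla\phi)\cdot\nabla\mu$ with the correct $\rho$-integral so that the Fisher information $\scr{I}_H(\mu) = \int H(\nabla u_\mu)\dd{\mu}$ actually appears, and show that the leftover terms are precisely the nonnegative $H$-contributions plus the $\omega$-defect. This requires that all the objects involved ($\psi$, $\phi$, $\log\rho$, $\log\mu$) are Lipschitz and that $\rho$ is bounded away from $0$ and $\infty$, which is guaranteed by Theorem~\ref{thm: one_step_JKO}(1),(3) and Theorem~\ref{thm: Kanto_duality}(2); without these, the integration by parts and the use of the five gradients inequality (which needs $W^{1,1}$ regularity of the measures) would not be licit. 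A secondary subtlety is handling the set $\{\nabla u_\rho = 0\}$, where $\alpha = 0$ by convention and the corresponding integrand vanishes, so it contributes nothing; one checks that $\nabla H$, extended by $0$ at the origin, is consistent with everything above. Finally, one should verify that the convexity inequality $\nabla H(z)\cdot z \geq H(z)$ is applied with the correct sign after all the oddness substitutions — this is the place where an error could flip the direction of the inequality, so I would track signs carefully through the $\nabla u_\rho = -\nabla\psi$ and $\nabla h^*$-odd substitutions.
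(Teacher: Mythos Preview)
Your proposal has all the right ingredients (five gradients inequality, the relation $\nabla H = \alpha\,\nabla h^*$, the modulus of monotonicity on the $\nabla V$ terms, and the pushforward identities via $T$), and your treatment of the $\omega$-term is correct and matches the paper exactly. However, there is a genuine gap in how $\scr{I}_H(\mu)$ enters the argument, and your use of the convexity inequality $\nabla H(z)\cdot z \geq H(z)$ on the $\rho$-side is the wrong move.

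Concretely: after starting from the five gradients inequality and substituting $\nabla\rho = -\rho(\nabla\psi+\nabla V)$ and $\nabla\mu = \mu(\nabla u_\mu-\nabla V)$, you arrive at
\[
\int_\Omega \nabla H(\nabla\phi)\cdot\nabla u_\mu\,\dd{\mu} \;\geq\; \int_\Omega \nabla H(\nabla\psi)\cdot\nabla\psi\,\dd{\rho} \;+\; [\nabla V\text{-terms}].
\]
You then replace $\nabla H(\nabla\psi)\cdot\nabla\psi$ by the smaller quantity $H(\nabla\psi)$, which yields $\scr{I}_H(\rho)$ on the right. But now you still need the left-hand side to be bounded above by $\scr{I}_H(\mu)$, and this is \emph{false} in general: convexity only gives $\nabla H(\nabla\phi)\cdot\nabla u_\mu \leq H(\nabla u_\mu) + \bigl[\nabla H(\nabla\phi)\cdot\nabla\phi - H(\nabla\phi)\bigr]$, and the bracket is nonnegative, so you have overshot. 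The point is that the bracket, pushed forward to $\rho$, equals $\int[\nabla H(\nabla\psi)\cdot\nabla\psi - H(\nabla\psi)]\dd{\rho}$, which is precisely the quantity you threw away with your inequality on the $\rho$-side. If you keep that term as an equality instead of applying $\nabla H(z)\cdot z\geq H(z)$, the two contributions cancel exactly and the proof closes.

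The paper avoids this detour by organizing the proof the other way round: it \emph{starts} from the convexity of $H$ on the $\mu$-side, linearizing $H(\nabla u_\mu)$ around the Kantorovitch potential (the $\mu$-side potential, which is your $\phi$), namely
\[
\scr{I}_H(\mu) \;\geq\; \int_\Omega H(\nabla\phi)\,\dd{\mu} \;+\; \int_\Omega \nabla H(\nabla\phi)\cdot(\nabla u_\mu - \nabla\phi)\,\dd{\mu}.
\]
The first integral is exactly $\scr{I}_H(\rho)$ after pushforward by $T$ (using $\nabla\phi\circ T=-\nabla\psi$ and evenness of $H$). The second integral, after the pushforward identity $-\int\nabla H(\nabla\phi)\cdot\nabla\phi\,\dd{\mu}=\int\nabla H(\nabla\psi)\cdot\nabla u_\rho\,\dd{\rho}$, becomes the symmetric expression $\int\nabla H(\nabla\phi)\cdot\nabla u_\mu\,\dd{\mu}+\int\nabla H(\nabla\psi)\cdot\nabla u_\rho\,\dd{\rho}$, which then splits cleanly into the five-gradients part ($\geq 0$) and the $\nabla V$-part (handled by $\omega$ exactly as you describe). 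So convexity is used \emph{once}, at the $\mu$-side and around $\nabla\phi$ (not around $0$), and $\scr{I}_H(\mu)$ appears from the outset as the desired upper bound.
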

    
    \begin{proof}
        The fact that $\rho > 0$ and $u_\rho \in W^{1,\infty}(\Omega)$ follows from theorem \ref{thm: one_step_JKO}. Consider $(\phi,\psi)$ a pair of Kantorovitch potentials from $\rho$ to $\mu$, with $\phi$ chosen so that $-\phi = u_\rho$. Since $\psi$ is $\mu$-a.e. differentiable one has the inequality
        \begin{equation*} 
            \scr{I}_H(\mu) = \int_\Omega H(\nabla u_\mu) \dd{\mu} \geq \int_\Omega H(\nabla \psi) \dd{\mu} + \int_\Omega \nabla H(\nabla \psi) \cdot (\nabla u_\mu - \nabla \psi) \dd{\mu}
        \end{equation*}
        If $T$ is the optimal transport map from $\rho$ to $\mu$, we can use that $\nabla \psi \circ T = -\nabla \phi$ $\rho$-a.e., which follows from the fact that $T(x)$ attains the maximum of $y \mapsto \psi(y) + \phi(x) - h(x-y)$. Using the symmetry of $H$, we can then write
        \begin{equation*} 
            \int_\Omega H(\nabla \psi) \dd{\mu} = \int_\Omega(\nabla \psi \circ T) \dd{\rho} = \int_\Omega H(-\nabla \phi) \dd{\rho} = \int_\Omega H(\nabla u_\rho) \dd{\rho} = \scr{I}_H(\rho) 
        \end{equation*}
        For the second term, we have
        \begin{equation*}
            -\int_\Omega \nabla H(\nabla \psi) \cdot  \nabla \psi \dd{\mu} = \int_\Omega \nabla H(-\nabla \phi) \cdot \nabla \phi \dd{\rho} = \int_\Omega \nabla H (\nabla \phi) \cdot \nabla u_\rho \dd{\rho}
        \end{equation*}
        so that 
        \begin{align*}
            \int_\Omega \nabla H (\nabla \psi) \cdot ( \nabla u_\mu - \nabla \psi) \dd{\mu} &= \int_\Omega \nabla H(\nabla \psi) \cdot \nabla u_\mu \dd{\mu} + \int_\Omega \nabla H(\nabla \phi) \cdot \nabla u_\rho \dd{\rho} \\
            &= \int_\Omega \nabla H(\nabla \psi) \cdot \nabla \mu \dd{x} + \int_\Omega \nabla H(\nabla \phi) \cdot \nabla \rho \dd{x} \\
            &+ \int_\Omega \nabla H(\nabla \psi) \cdot \nabla V \dd{\mu} + \int_\Omega \nabla H(\nabla \phi) \cdot \nabla V \dd{\rho}
        \end{align*}
        By the five-gradients inequality \ref{eq: five_gradients_inequality} with the convex function $H$ and the Kantorovich pair $(\phi,\psi)$ we have
        \begin{align*}
            \int_\Omega \nabla H (\nabla \phi) \cdot  \nabla{\rho} \dd{x} + \int_\Omega \nabla H (\nabla \psi) \cdot \nabla \mu \dd{x} \geq 0.
        \end{align*}
        Furthermore
        \begin{equation*}
            \int_\Omega \nabla H(\nabla \psi) \cdot \nabla V \dd{\mu} = \int_\Omega \nabla H(-\nabla \phi) \cdot \nabla V \circ T \dd{\rho} = -\int_\Omega \nabla H(\nabla \phi) \cdot \nabla V \circ T \dd{\rho}
        \end{equation*}
        Hence the second term reduces to
        \begin{align*}
            \int_\Omega \nabla H(-\nabla \phi) \cdot [\nabla V \circ T- \nabla V] \dd{\rho} &= \int_\Omega \alpha(-\nabla \phi) \nabla h^*(-\nabla \phi) \cdot [\nabla V \circ T- \nabla V] \dd{\rho} \\
            &= \int_\Omega \alpha(-\nabla \phi) (T-\id) \cdot [\nabla V \circ T- \nabla V] \dd{\rho} \\
            &\geq \int_\Omega \alpha(-\nabla \phi) \omega(T-\id) \dd{\rho} = \int_\Omega \alpha(\nabla u_\rho) \omega(\nabla h^*(\nabla u_\rho)) \dd{\rho} \\
            &= \int_\Omega \alpha(\nabla u_\rho) \omega(\nabla h^*(\nabla u_\rho) \dd{\rho}
        \end{align*}
        which gives the final inequality.
    \end{proof}

    \begin{remark}
        Such an estimate has already been studied, in the case of the quadratic cost, by Di Marino and Santambrogio in \cite{FokkerPlanckLp}, to derive Sobolev estimates in the JKO scheme, and in \cite{Caillet_Continuity} by Santambrogio and the first order to show propagation of modulus of continuity along the JKO. Our proof strategy follows their approach, with obvious modifications to account for the modified cost.
    \end{remark}

\subsection{Iteration along the flow}

    We now consider an initial data $\rho_0 \in \scr{P}(\Omega)$ absolutely continuous, with $\rho_0 > 0$, and $u_{\rho_0} \in W^{1,\infty}(\Omega)$. We define a discrete flow $(\rho_k)_{k \geq 0}$ by iteration of the one-step JKO scheme. That is
    \begin{equation*} 
        \rho_{k+1} \in \argmin \scr{F} + \scr{T}_h(\cdot,\rho_k) 
    \end{equation*}
    By the regularity of the one-step JKO scheme, we have $\rho_k > 0$ for all $k \geq 1$ and $u_{\rho_k} \in W^{1,\infty}(\Omega)$. Hence, we can apply the two dissipation inequalities. We denote $u_k := u_{\rho_k}$. From the one-step dissipation of entropy and Fisher's we derive the following two estimates
    
    \begin{proposition}[Dissipation of Entropy and Fisher's along the flow] \label{prop: dissipation_flow}
        Let's define
        \begin{align*}
            &R_k^{\rm{ent}} := \int_\Omega \left [ L(\nabla u_k) + L^*(\nabla h^*(\nabla u_k)) - \sigma(\nabla h^*(\nabla u_k)) \right] \dd{\rho_k} \\
            &R_k^{\rm{inf}} := \int_\Omega \alpha(\nabla u_k) \omega(\nabla h^*(\nabla u_k)) \dd{\rho_k}
        \end{align*}
        Then, for all $n \geq 0$ we have
        \begin{align*}  
            &\scr{I}_L(\rho) - \scr{I}_L(\rho_n) + \sum_{k=1}^n R_k^{\rm{ent}} \geq \scr{F}(\rho) - \scr{F}(\rho_n) \\
            &\scr{I}_H(\rho_0) - \scr{I}_H(\rho_n) - \sum_{k=1}^n R_k^{\rm{inf}} \geq 0
        \end{align*}
        and as a consequence
        \begin{equation} \label{eq: almost_log_sobo}
            \scr{F}(\rho) - \scr{F}(\rho_n) \leq \scr{I}_G(\rho) - \scr{I}_G(\rho_n) + \sum_{k=1}^n \Delta_k
        \end{equation}
        with $\Delta_k = R_k^{\rm{ent}} - R_k^{\rm{inf}}$ and $G = H + L$. 
    \end{proposition}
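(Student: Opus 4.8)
The plan is to run the two one-step estimates of Propositions~\ref{prop: dissipation_entropy} and~\ref{prop: dissipation_information} along the iteration $\rho_{k+1} = Q[\rho_k]$ and telescope. First I would record that every iterate is an admissible input for those propositions: $\rho_0 > 0$ with $u_{\rho_0} \in W^{1,\infty}(\Omega)$ by hypothesis, and for $k \ge 1$ Theorem~\ref{thm: one_step_JKO} guarantees $\rho_k > 0$ and $u_k \in W^{1,\infty}(\Omega)$; in particular, since $\Omega$ is bounded, $\nabla u_k$ is bounded and $L$, $H$ are finite continuous convex functions, so all the quantities $\scr{I}_L(\rho_k)$, $\scr{I}_H(\rho_k)$, $R_k^{\rm ent}$, $R_k^{\rm inf}$ are finite and the telescopings below are legitimate.

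For the entropy chain, applying Proposition~\ref{prop: dissipation_entropy} with $\mu = \rho_k$, $\rho = \rho_{k+1}$ and rearranging gives
\[
\scr{F}(\rho_k) - \scr{F}(\rho_{k+1}) \le \int_\Omega L(\nabla u_k)\dd{\rho_k} + \int_\Omega \big[L^*(\nabla h^*(\nabla u_{k+1})) - \sigma(\nabla h^*(\nabla u_{k+1}))\big]\dd{\rho_{k+1}}.
\]
The key bookkeeping observation is that the first integral on the right is exactly $\scr{I}_L(\rho_k)$, while the second equals $R_{k+1}^{\rm ent} - \scr{I}_L(\rho_{k+1})$ by the very definition of $R^{\rm ent}$; hence the one-step bound reads $\scr{F}(\rho_k) - \scr{F}(\rho_{k+1}) \le \scr{I}_L(\rho_k) - \scr{I}_L(\rho_{k+1}) + R_{k+1}^{\rm ent}$, and summing over $k = 0, \dots, n-1$ telescopes the $\scr{F}$ and $\scr{I}_L$ terms, yielding the first displayed inequality (with $\rho = \rho_0$). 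For the Fisher-information chain, Proposition~\ref{prop: dissipation_information} reads directly $\scr{I}_H(\rho_k) - \scr{I}_H(\rho_{k+1}) \ge R_{k+1}^{\rm inf}$, and the same summation gives the second displayed inequality.

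To obtain~\eqref{eq: almost_log_sobo} I would simply add the two: using the additivity $\scr{I}_G = \scr{I}_{H+L} = \scr{I}_H + \scr{I}_L$, which is immediate from the integral definition of $\scr{I}_G$, the quantity $\scr{I}_G(\rho_0) - \scr{I}_G(\rho_n) + \sum_{k=1}^n \Delta_k$ is precisely the sum of the left-hand side of the first inequality with the left-hand side of the second, hence is bounded below by $(\scr{F}(\rho_0) - \scr{F}(\rho_n)) + 0$. I do not expect a genuine obstacle in this proposition — it is a careful telescoping of the two one-step estimates — and the only subtlety is that the entropy estimate mixes evaluation points ($L$ at $\rho_k$ but $L^*$ and $\sigma$ at $\rho_{k+1}$), so the sum must be reorganized so that consecutive $\scr{I}_L(\rho_k)$ terms cancel; once this is seen, everything is formal.
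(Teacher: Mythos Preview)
Your proof is correct and follows essentially the same approach as the paper: apply the one-step entropy and Fisher estimates with $\mu=\rho_k$, $\rho=\rho_{k+1}$, reorganize the entropy step as $\scr{F}(\rho_k)-\scr{F}(\rho_{k+1}) \le \scr{I}_L(\rho_k)-\scr{I}_L(\rho_{k+1})+R_{k+1}^{\rm ent}$, telescope both chains over $k=0,\dots,n-1$, and add. Your bookkeeping observation that the $(L^*-\sigma)$-integral at step $k+1$ equals $R_{k+1}^{\rm ent}-\scr{I}_L(\rho_{k+1})$ is exactly the rearrangement the paper uses (though it states it more tersely), and your remarks on finiteness of the terms are a welcome addition.
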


    \begin{proof}
        We prove each inequality separately:
        \begin{enumerate} 
            \item By the one-step dissipation of entropy \ref{eq: dissipation_entropy} we have
            \begin{equation*}
                \scr{F}(\rho_{k+1}) + \int_\Omega L(\nabla u_k) \dd{\rho_k} + \int_\Omega L^*(\nabla h^*(\nabla u_{k+1})) \dd{\rho_{k+1}} \geq \scr{F}(\rho_k) + \int_\Omega \sigma(\nabla h^*(\nabla u_{k+1}) \dd{\rho_{k+1}}
            \end{equation*}
            which gives
            \begin{equation*}
                \scr{F}(\rho_k) - \scr{F}(\rho_{k+1}) \leq \scr{I}_L(\rho_k) - \scr{I}_L(\rho_{k+1}) + R_{k+1}^{\rm{ent}}
            \end{equation*}
            Summing these inequalities from $k=0$ to $n-1$ gives the first inequality. 
            \item By the one-step dissipation of Fisher's information \ref{eq: dissipation_information} we have
            \begin{equation*}
                \scr{I}_H(\rho_k) \geq \scr{I}_H(\rho_{k+1}) + \int_\Omega \alpha(\nabla u_{k+1}) \omega(\nabla h^*(\nabla u_{k+1})) \dd{\rho_{k+1}} 
            \end{equation*}
            Again, summing the inequalities from $k=0$ to $n-1$ gives the second inequality. 
            \item Adding up the two previous inequalities gives the final one. 
        \end{enumerate}
    \end{proof}
    
\subsection{Convergence of the flow to the equilibrium}
    
    To conclude, we would like to pass to the limit as $n \to +\infty$ in the previous identity. To do this, we shall first prove that $\rho_n \to \eta$ in a sufficiently strong sense as $n \to +\infty$. For that, we need the following stability result for the JKO scheme.

    \begin{lemma}[Stability of the JKO-scheme] \label{lemma: stability_JKO}
        Let $\mu_n \to \mu$ weakly on $\cl{P}(\Omega)$. Then $Q[\mu_n] \to Q[\mu]$ uniformly on $\overline{\Omega}$. 
    \end{lemma}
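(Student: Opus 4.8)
The plan is to combine the variational characterization of $Q[\mu]$ with the uniform a priori bounds from Theorem~\ref{thm: one_step_JKO} and a $\Gamma$-convergence-type argument. First I would set $\rho_n := Q[\mu_n]$ and $\rho := Q[\mu]$, and recall that by Theorem~\ref{thm: one_step_JKO} all the $\rho_n$ satisfy the uniform bounds $\delta \le \rho_n \le 1/\delta$ and are Lipschitz with a constant depending only on $V, h, \Omega$. Hence $(\rho_n)$ is precompact in $C(\overline{\Omega})$ by Arzelà–Ascoli, and it suffices to show that every uniformly convergent subsequence has limit $\rho$; then the whole sequence converges. So fix a subsequence (not relabeled) with $\rho_n \to \tilde\rho$ uniformly on $\overline{\Omega}$; note $\tilde\rho$ is a probability density with $\delta \le \tilde\rho \le 1/\delta$.

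Next I would pass to the limit in the minimality inequality. By definition of $\rho_n$,
\begin{equation*}
    \scr{F}(\rho_n) + \scr{T}_h(\rho_n,\mu_n) \le \scr{F}(\sigma) + \scr{T}_h(\sigma,\mu_n) \qquad \text{for all } \sigma \in \scr{P}(\Omega).
\end{equation*}
The cost $\scr{T}_h$ is continuous for weak convergence of both arguments (here $\rho_n \to \tilde\rho$ uniformly, hence weakly, and $\mu_n \to \mu$ weakly), since $h$ is continuous and $\Omega$ is bounded, so $\scr{T}_h(\rho_n,\mu_n) \to \scr{T}_h(\tilde\rho,\mu)$ and $\scr{T}_h(\sigma,\mu_n) \to \scr{T}_h(\sigma,\mu)$. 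The potential energy $\scr{V}$ is continuous for weak convergence since $V$ is continuous and bounded on $\Omega$. For the entropy $\scr{E}$, the uniform convergence $\rho_n \to \tilde\rho$ together with the two-sided bound $\delta \le \rho_n \le 1/\delta$ makes $z\log z$ applied to $\rho_n$ converge uniformly, so $\scr{E}(\rho_n) \to \scr{E}(\tilde\rho)$. Passing to the limit gives $\scr{F}(\tilde\rho) + \scr{T}_h(\tilde\rho,\mu) \le \scr{F}(\sigma) + \scr{T}_h(\sigma,\mu)$ for every competitor $\sigma$, so $\tilde\rho$ is a minimizer of the one-step JKO problem with reference measure $\mu$. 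By the uniqueness statement in Theorem~\ref{thm: one_step_JKO} (strict convexity of $\scr{F}$), $\tilde\rho = \rho = Q[\mu]$.

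Finally, since every uniformly convergent subsequence of $(\rho_n)$ has the same limit $Q[\mu]$, and the sequence is precompact in $C(\overline{\Omega})$, the full sequence converges: $Q[\mu_n] \to Q[\mu]$ uniformly on $\overline{\Omega}$. The only point requiring a little care — the main (mild) obstacle — is the continuity of the transport cost $\scr{T}_h$ under joint weak convergence of the marginals; this follows from a standard argument (tightness of optimal plans on the compact set $\overline{\Omega}\times\overline{\Omega}$, lower semicontinuity giving one inequality, and gluing/construction of competitor plans giving the other), and I would cite \cite{OTAM} or \cite{AGS} for it rather than reprove it.
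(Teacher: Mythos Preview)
Your proof is correct and follows essentially the same approach as the paper: uniform Lipschitz and pointwise bounds from Theorem~\ref{thm: one_step_JKO} give Arzel\`a--Ascoli precompactness, then one passes to the limit in the minimality inequality using continuity of $\scr{T}_h$ and uniqueness of the minimizer. The only cosmetic difference is that the paper invokes lower semicontinuity of $\scr{F}$ (which suffices for the one-sided inequality needed), whereas you argue full continuity of $\scr{F}$ along the uniformly convergent subsequence via the two-sided bounds on $\rho_n$; both are fine.
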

    
    \begin{proof}
        By theorem \ref{thm: one_step_JKO} there exists a universal $\delta$ such that $\delta \leq Q[\mu_n] \leq \delta^{-1}$ and $Q[\mu_n]$ is uniformly Lipschitz. By Arzelà-Ascoli theorem, there exists a uniformly converging subsequence. Then for any $\rho \in \scr{P}(\Omega)$, passing to the limit along a converging subsequence in the inequality 
        \begin{equation*}
            \scr{F}(Q[\mu_n]) + \scr{T}_h(Q[\mu_n],\mu_n) \leq \scr{F}(\rho) + \scr{T}_h(\rho,\mu_n) 
        \end{equation*}
        using joint continuity of $\cl{T}_h$, since $\Omega$ is pre-compact and $h$ continuous (see theorem 1.51 \cite{OTAM}), and the fact that $\scr{F}$ is l.s.c. (see for instance \cite{OTAM} proposition 7.1 and 7.7), we deduce that any limit point is a minimizer of $\scr{F} + \scr{T}_h(\cdot,\mu)$, hence is equal to $Q[\mu]$ by uniqueness. Therefore the whole sequence is converging uniformly to $Q[\mu]$. 
    \end{proof}
    
    \begin{proposition}[Convergence to equilibrium] \label{prop: convergence_equlibrium}
        We have $\rho_n \to \eta$ and $u_n \to 0$ uniformly on $\Omega$ when $n \to +\infty$.  
    \end{proposition}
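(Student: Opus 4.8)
The plan is to use $\scr{F}$ as a Lyapunov functional for the discrete flow, whose unique minimizer on $\scr{P}(\Omega)$ is $\eta$, and to combine its monotone decay with the stability Lemma~\ref{lemma: stability_JKO} and the uniform estimates of Theorem~\ref{thm: one_step_JKO}.

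First I would record the monotonicity. Since $h$ is radially symmetric, strictly convex and $h(0)=0$, it is nonnegative and vanishes only at $0$, so $\scr{T}_h(\rho_k,\rho_k)=0$ via the diagonal plan; testing the minimality defining $\rho_{k+1}=Q[\rho_k]$ against the competitor $\rho_k$ gives $\scr{F}(\rho_{k+1})+\scr{T}_h(\rho_{k+1},\rho_k)\le\scr{F}(\rho_k)$. Writing $\scr{F}(\rho)=\int_\Omega\rho\log(\rho/\eta)\dd{x}\ge 0$ (Jensen's inequality, since $\eta\in\scr{P}(\Omega)$) and noting that $\scr{F}(\rho_0)<\infty$ because $u_{\rho_0},V\in W^{1,\infty}(\Omega)$ force $\rho_0$ to be bounded above and below by positive constants, I telescope over $k$ to get $\sum_{k\ge0}\scr{T}_h(\rho_{k+1},\rho_k)\le\scr{F}(\rho_0)<\infty$; in particular $\scr{F}(\rho_k)$ decreases to a limit and $\scr{T}_h(\rho_{k+1},\rho_k)\to 0$.

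Next comes the identification of the limit. By Theorem~\ref{thm: one_step_JKO}, for $n\ge1$ one has $\delta\le\rho_n\le\delta^{-1}$ with $\rho_n$ Lipschitz uniformly in $n$, so $\{\rho_n\}_{n\ge1}$ is relatively compact in $C(\overline\Omega)$ by Arzelà-Ascoli. Let $\rho_{n_j}\to\rho_\infty$ uniformly along a subsequence. Uniform convergence implies weak convergence, so by Lemma~\ref{lemma: stability_JKO} $\rho_{n_j+1}=Q[\rho_{n_j}]\to Q[\rho_\infty]$ uniformly; passing to the limit in $\scr{T}_h(\rho_{n_j+1},\rho_{n_j})\to 0$ and using joint continuity of $\scr{T}_h$ on $\scr{P}(\Omega)\times\scr{P}(\Omega)$ (valid since $\Omega$ is bounded and $h$ continuous) gives $\scr{T}_h(Q[\rho_\infty],\rho_\infty)=0$, hence $Q[\rho_\infty]=\rho_\infty$, because a zero-cost plan is concentrated on the diagonal. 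It remains to see that $\eta$ is the only fixed point of $Q$: by the optimality condition of Theorem~\ref{thm: one_step_JKO} a Kantorovich potential $\psi$ from $\rho_\infty$ to $\rho_\infty$ satisfies $\log\rho_\infty+V=-\psi+C$, while the associated transport map $\id-\nabla h^*(\nabla\psi)$ is the unique optimal map from $\rho_\infty$ to itself, i.e.\ the identity $\rho_\infty$-a.e.; since $\rho_\infty\ge\delta>0$ this forces $\nabla h^*(\nabla\psi)=0$ a.e.\ on $\Omega$, and $\nabla h^*(z)=0$ forces $z=0$ (as $\nabla h^*=(\nabla h)^{-1}$ and $\nabla h(0)=0$), so $\nabla\psi=0$ a.e.; as $\Omega$ is connected and $\psi$ is Lipschitz, $\psi$ is constant, hence $\log\rho_\infty+V$ is constant, i.e.\ $\rho_\infty=ce^{-V}$, and $\int_\Omega\rho_\infty=1=\int_\Omega\eta$ gives $\rho_\infty=\eta$.

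Since every uniform limit point of the relatively compact sequence $(\rho_n)_{n\ge1}$ equals $\eta$, the whole sequence converges to $\eta$ uniformly on $\overline\Omega$. Finally $u_n=\log\rho_n+V$, and $\rho_n\to\eta=e^{-V}$ uniformly with $\rho_n\ge\delta$, so by uniform continuity of $\log$ on $[\delta,\delta^{-1}]$ we get $\log\rho_n\to-V$ uniformly, whence $u_n\to 0$ uniformly. The main obstacle is exactly the identification of subsequential limits: the decay of $\scr{F}$ only tells us that consecutive iterates get close ($\scr{T}_h(\rho_{k+1},\rho_k)\to 0$), not that they approach $\eta$, so one genuinely needs the stability of $Q$ to turn a limit point into a fixed point of $Q$, and then the Euler-Lagrange equation of the one-step problem to conclude that $\eta$ is the only fixed point.
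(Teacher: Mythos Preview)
Your proof is correct and follows essentially the same strategy as the paper: monotone decay of $\scr{F}$ along the scheme plus telescoping, Arzel\`a--Ascoli via the uniform estimates of Theorem~\ref{thm: one_step_JKO}, stability of $Q$ to pass subsequential limits through one iteration, and the Euler--Lagrange condition to identify the unique fixed point of $Q$ as $\eta$. The only cosmetic difference is in how you reach $\scr{T}_h(Q[\rho_\infty],\rho_\infty)=0$: you pass to the limit directly in $\scr{T}_h(\rho_{n_j+1},\rho_{n_j})\to 0$ via joint continuity of $\scr{T}_h$, whereas the paper instead uses $\scr{F}(\rho_n)-\scr{F}(\rho_{n+1})\to 0$ together with continuity of $\scr{F}$ under uniform convergence and the one-step inequality at the limit; your route is marginally more direct, and your justification that a Kantorovich potential from $\rho_\infty$ to itself must be constant (via $\nabla h^*(\nabla\psi)=0\Rightarrow\nabla\psi=0$) is a bit more explicit than the paper's.
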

    
    \begin{proof}
        By Theorem \ref{thm: one_step_JKO} we can bound $(\rho_n)_{n \geq 1}$ and $(u_n)_{n \geq 1}$ uniformly in Lipschitz norm. Hence we can find a subsequence $k_n \to +\infty$ such that $\rho_{k_n} \to \rho_\infty$ and $u_{k_n} \to u_\infty$ uniformly. 
        
        By optimality of $\rho_{n+1}$, we also have
        \begin{equation*} 
            \scr{F}(\rho_{n+1}) + \scr{T}_h(\rho_{n+1},\rho_n) \leq \scr{F}(\rho_n) 
        \end{equation*}
        By optimality of $\rho_{n+1}$. Hence $\scr{F}(\rho_n) - \scr{F}(\rho_{n+1}) \geq \scr{T}_h(\rho_{n+1},\rho_n) \geq 0$, but since :
        \begin{equation*} 
            \sum_{k=0}^N \scr{F}(\rho_n) - \scr{F}(\rho_{n+1}) = \scr{F}(\rho_0) - \scr{F}(\rho_{N+1}) \leq \scr{F}(\rho_0) - \inf \scr{F} 
        \end{equation*}
        we deduce that $\scr{F}(\rho_n) - \scr{F}(\rho_{n+1})$ is summable, hence goes to $0$ as $n \to +\infty$. But since $\rho_{n+1} = Q[\rho_n]$, by continuity we have $\rho_{k_n+1} = Q[\rho_{k_n}] \to Q[\rho_\infty]$. Using that $\scr{F}$ is continuous for the uniform convergence, we obtain that $\scr{F}(\rho_\infty) = \scr{F}(Q[\rho_\infty])$, which implies that $\mathscr{T}_h(\rho_\infty,Q[\rho_\infty]) = 0$ by the inequality $\scr{F}(Q[\rho_\infty]) + \mathscr{T}_h(\rho_\infty,Q[\rho_\infty])  \leq \scr{F}(\rho_\infty)$. As $\mathscr{T}_h(\rho_\infty,Q[\rho_\infty]) = 0$ implies that $T x = x$ $\rho_\infty$-a.e. ($T$ being the optimal transport map from $Q[\rho_\infty]$ to $\rho_\infty$), since $h(z) = 0$ iff $z=0$, this implies that $Q[\rho_\infty] = \rho_\infty$, i.e. $\rho_\infty$ is a fixed point of $Q$. \\
        In particular, Kantorovitch potentials from $Q[\rho_\infty]$ to $\rho_\infty$ are constant. By the optimality condition for the one-step JKO scheme, this implies $\log Q[\rho_\infty] + V = -C$ for some $C$, hence $\rho_\infty = Q[\rho_\infty] = e^{-C - V} = e^{-C} \eta$, and integrating implies that $C=0$, that is the only fixed point is $\eta$. Since this is true for any subsequence, it is in fact true for the whole sequence. \\ 
        Furthermore, passing to the limit to the limit in $u_{k_n} = \log \rho_{k_n} + V$ implies that $u_\infty = 0$. 
    \end{proof}

\subsection{End of the proof}

    We can now complete the proof of Theorem \ref{thm: log_Sobolev}, assuming first that $\Omega$ is bounded. Under the hypotheses of our main theorem, since
    \begin{equation*}
        \Delta_k = \int_\Omega [L(\nabla u_k) + L^*(\nabla h^*(\nabla u_k)) - \sigma(\nabla h^*(\nabla u_k)) - \alpha(\nabla u_k) \omega(\nabla h^*(\nabla u_k))] \dd{\rho_k}
    \end{equation*}
    we then have $\Delta_k \leq 0$ provided that
    \begin{equation*}
        L(z) + L^*(\nabla h^*(z)) \leq \sigma(\nabla h^*(z)) + \alpha(z) \omega(\nabla h^*(z))
    \end{equation*}
    This is exactly the hypothesis of theorem \ref{thm: log_Sobolev}. It follows that 
    \begin{equation*}
        \scr{F}(\rho_0) - \scr{F}(\rho_n) \leq \scr{I}_G(\rho_0) - \scr{I}_G(\rho_n)
    \end{equation*}
    for all $n \geq 0$. Since $\rho_n \to \eta$ and $u_n \to 0$ uniformly, we have $\scr{F}(\rho_n) \to \scr{F}(\eta) = 0$. Let $\gamma_n$ be the optimal coupling between $\rho_n$ and $\rho_{n-1}$, given by the map $T_n$. Then, $\nabla u_n(x) = \nabla h(T_n x - x)$ $\rho_n$-a.e. Hence
    \begin{align*}
        \scr{I}_G(\rho_n) = \int_\Omega G(\nabla u_n) \dd{\rho_n} = \int_{\Omega \times \Omega} G(\nabla h(x-y)) \dd{\gamma_n}
    \end{align*}
    As that the set of all couplings with marginals given by $(\rho_n,\rho_{n-1})$ for some $n$ is weakly compact, and continuity of the transport cost, we can show that up to extraction, $\gamma_n$ converges to some optimal coupling between $\eta$ and itself, hence to $(\id,\id)_\# \eta$. Since $G \circ \nabla h$ is continuous, we have $\scr{I}_G(\rho_n) \to G(0) = 0$. This shows that
    \begin{equation*}
        \scr{F}(\rho_0) \leq \scr{I}_G(\rho_0)
    \end{equation*}
    The form of the Log-Sobolev inequality explicitly stated in the theorem can be easily obtained by replacing $\rho_0$ by its log-density with respect to the measure $\eta$, i.e. by $g = u_{\rho_0} = \log \rho_0 + V$. This yields the inequality for all $g$ with $\int_\Omega e^g \dd{\eta} = 1$ with $\nabla g \in W^{1,\infty}(\Omega)$, and concludes our proof.

    For the general case of $\Omega$ unbounded, we can approximate $\Omega$ by $\Omega_n$ sequence of bounded convex domains, using that the condition on $V$ doesn't depends on $n$, and by approximation we obtain the inequality for $\Omega$.

\section{Applications}

\subsection{Simpler conditions}

    By taking $L = h^*$ and $H = C h^*$ in the hypotheses of Theorem \ref{thm: log_Sobolev}, we obtain the following simpler set of conditions.

    \begin{proposition}[A simpler condition] \label{prop: simpler_condition}
        Suppose that $V$ has modulus $\omega,\sigma$, and that for some cost function $h$, and constant $C \geq 0$ we have
        \begin{equation} \label{eq: simpler_condition}
            \nabla h(z) \cdot z \leq \sigma(z) + C \omega(z)
        \end{equation}
        Then $\eta = e^{-V}$ satisfies the modified Log-Sobolev inequality 
        \begin{equation} \label{eq: simpler_condition_log_sobo}
            \int g \log g \dd{\eta} \leq (1 + C) \int h^*(\nabla \log g) g \dd{\eta}
        \end{equation}
    \end{proposition}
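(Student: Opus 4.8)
The plan is to obtain this proposition as a direct corollary of Theorem~\ref{thm: log_Sobolev}, applied with the particular choice $L = h^*$ and $H = C h^*$, so that $G = H + L = (1+C)h^*$ and the inequality \eqref{eq: simpler_condition_log_sobo} is literally the conclusion of the theorem. The argument then reduces to two verifications: (a) that $L = h^*$ and $H = Ch^*$ are admissible for the standing assumptions, and (b) that the pointwise compatibility hypothesis of Theorem~\ref{thm: log_Sobolev} is implied by \eqref{eq: simpler_condition}.

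For (a): $h^*$ is convex as a Legendre transform, superlinear because $h$ is finite on all of $\bb{R}^d$, radially symmetric because $h$ is, and satisfies $h^*(0) = -\inf h = -h(0) = 0$ since strict convexity together with radial symmetry and $h(0)=0$ forces $h > 0$ away from the origin with a strict global minimum there. Hence $L = h^*$ satisfies the requirements on $L$, and $H = Ch^*$ (with $C \geq 0$) those on $H$; it is in fact of class $C^1(\bb{R}^d)$.

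For (b): with $L = h^*$ one has $L^* = h^{**} = h$, so $L^*(\nabla h^*(z)) = h(\nabla h^*(z))$, and with $H = Ch^*$ one has $\nabla H(z) = C\nabla h^*(z)$, whence $\alpha(z) = |\nabla H(z)|/|\nabla h^*(z)| = C$ for $z \neq 0$. Thus the hypothesis of Theorem~\ref{thm: log_Sobolev} becomes: for all $z \neq 0$,
\begin{equation*}
    h^*(z) + h(\nabla h^*(z)) \leq \sigma(\nabla h^*(z)) + C\,\omega(\nabla h^*(z)).
\end{equation*}
Here I would invoke the Young equality. Setting $w := \nabla h^*(z)$, the strict convexity, superlinearity and $C^1$ regularity of $h$ make $\nabla h$ a bijection of $\bb{R}^d$ with inverse $\nabla h^*$, so $z = \nabla h(w)$ and $h^*(z) + h(w) = \langle z, w\rangle = \nabla h(w)\cdot w$. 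The left-hand side above is therefore exactly $\nabla h(w)\cdot w$, and since $\nabla h(0) = 0$ (strict minimum at $0$), $w$ ranges over $\bb{R}^d\setminus\{0\}$ precisely as $z$ does. Hence the displayed condition is equivalent to $\nabla h(w)\cdot w \leq \sigma(w) + C\omega(w)$ for all $w \neq 0$, which is \eqref{eq: simpler_condition} (the case $w=0$ being trivial since $\sigma,\omega \geq 0$). Applying Theorem~\ref{thm: log_Sobolev} then gives \eqref{eq: simpler_condition_log_sobo}.

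The only real point requiring care — the ``main obstacle'', such as it is — is the Legendre-duality bookkeeping: confirming that $\nabla h^*$ is a bijection of $\bb{R}^d$ sending $0$ to $0$, so that the quantifier ``for all $z\neq 0$'' in the theorem matches ``for all $w\neq0$'' in \eqref{eq: simpler_condition}, and recognizing the left-hand side of the theorem's condition as the Young identity $\langle z,\nabla h^*(z)\rangle$. Everything else is a direct substitution.
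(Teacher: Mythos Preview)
Your proposal is correct and follows exactly the approach indicated in the paper, namely specializing Theorem~\ref{thm: log_Sobolev} with $L = h^*$ and $H = Ch^*$; the paper does not spell out the Young-identity and bijectivity verifications you carry out, but these are precisely the computations needed to see that the theorem's pointwise condition reduces to \eqref{eq: simpler_condition}.
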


    This is particularly interesting if $\sigma$ and $\omega$ are not convex, in which case the result of Cordero-Gangbo-Houdré \cite{IneqGeneEntOptTranspo} does not apply. On the other hand, when $\sigma$ and $\omega$ are proportional to the cost function $h$, our inequality is worse than theirs. Indeed, it requires bounds of the form $\nabla h(z) \cdot z \leq A h(z)$ for some $A>0$, which is equivalent to $h(\lambda z) \leq \lambda^A h(z)$ for all $\lambda \geq 1$, i.e., a sub-homogeneity condition on $h$. Therefore, our result does not fully recover Cordero-Gangbo-Houdré's inequality.

    If we restrict to radial moduli and cost, then inequality takes a simpler form, and one can derive the following corollary:

    \begin{corollary}[Radialy symmetric case] \label{coro: radially_sym_case}
        Suppose that $\sigma(z) = \sigma_r(|z|)$ and $\omega(z) = \omega_r(|z|)$, and assume that there exists a strictly increasing function $\theta : \bb{R}_+ \to \bb{R}$ and a constant $C \geq 0$ such that 
        \begin{equation} \label{eq: radial_sym_cond}
            \theta \leq \sigma_r + C \omega_r
        \end{equation}
        Then setting $l(t) := \int_0^t \theta^{-1}(s) \dd{s}$ the following Log-Sobolev inequality holds
        \begin{equation} \label{eq: radial_sym_log_sobo}
            \int_\Omega g e^g \dd{\eta} \leq (1+C) \int_\Omega l(|\nabla g|) e^g \dd{\eta}
        \end{equation}
    \end{corollary}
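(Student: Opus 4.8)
The plan is to deduce the corollary from Proposition~\ref{prop: simpler_condition} by choosing, for the given strictly increasing $\theta$ and the constant $C$, the radially symmetric cost function $h$ whose radial profile $h_{r}$ is the one-dimensional Legendre transform $l^{*}$ of $l(t)=\int_{0}^{t}\theta^{-1}(s)\dd{s}$. Assume, as we may, that $\theta$ is continuous with $\theta(0)=0$ and $\theta(t)\to+\infty$; then $\theta^{-1}$ is continuous, strictly increasing, with $\theta^{-1}(0)=0$, so $l$ is $C^{1}$, strictly convex, superlinear, with $l(0)=0$ and $l'=\theta^{-1}$. Consequently $h_{r}=l^{*}$ is finite everywhere, convex, of class $C^{1}$, strictly convex and superlinear, with $h_{r}(0)=-\inf l=0$ and $h_{r}'(0)=(l')^{-1}(0)=\theta(0)=0$; hence $h$ is a cost function in the sense of the paper. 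Moreover, since conjugation of a radially symmetric convex function amounts to conjugating its radial profile, $h^{*}$ has radial profile $(l^{*})^{*}=l$, i.e.\ $h^{*}(w)=l(|w|)$, and $h_{r}'=(l^{*})'=(l')^{-1}=\theta$.

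Next I would check the pointwise bound \eqref{eq: simpler_condition} required by Proposition~\ref{prop: simpler_condition}. Since $h$ is radial one has $\nabla h(z)\cdot z=|z|\,h_{r}'(|z|)=|z|\,\theta(|z|)$, while $\sigma(z)=\sigma_{r}(|z|)$ and $\omega(z)=\omega_{r}(|z|)$; thus \eqref{eq: simpler_condition} reads $|z|\,\theta(|z|)\le\sigma_{r}(|z|)+C\,\omega_{r}(|z|)$, which is the assumed compatibility bound \eqref{eq: radial_sym_cond}. Proposition~\ref{prop: simpler_condition} then gives, for every $g>0$ with $\int_{\Omega}g\dd{\eta}=1$ and $\nabla\log g\in W^{1,\infty}(\Omega)$, that $\int_{\Omega}g\log g\dd{\eta}\le(1+C)\int_{\Omega}h^{*}(\nabla\log g)\,g\dd{\eta}=(1+C)\int_{\Omega}l(|\nabla\log g|)\,g\dd{\eta}$. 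Applying this to the density $e^{g}$ in place of $g$ (so that $\log e^{g}=g$ and $\nabla\log e^{g}=\nabla g$) yields precisely \eqref{eq: radial_sym_log_sobo}.

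I expect the only real work to be in the first step, namely checking that $h=l^{*}(|\cdot|)$ is a genuine cost function and that its conjugate has radial profile exactly $l$. This is where the regularity and growth hypotheses on $\theta$ — transferred to $l$ and then to $l^{*}$ via Legendre duality — must be used carefully: continuity of $\theta$, the normalization $\theta(0)=0$, and surjectivity of $\theta$ onto $[0,+\infty)$ are what guarantee that $l$ is $C^{1}$, strictly convex and superlinear, hence that $h_{r}=l^{*}$ inherits these properties and that $h\in C^{1}(\bb{R}^{d})$ with $h(0)=0$. Once $h$ has been produced, the rest is the elementary radial computation of $\nabla h(z)\cdot z$ and a direct invocation of Proposition~\ref{prop: simpler_condition}.
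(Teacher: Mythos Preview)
Your approach is essentially the paper's: the paper sets $h(z)=\int_0^{|z|}\theta(s)\dd{s}$ directly and then computes that $h^{*}(z)=\int_0^{|z|}\theta^{-1}(s)\dd{s}=l(|z|)$, whereas you build $h$ from the dual side as $l^{*}(|\cdot|)$ and then recover $h_r'=\theta$. By Legendre duality these constructions coincide, so the strategy and the object $h$ are identical; you simply make the regularity hypotheses on $\theta$ (continuity, $\theta(0)=0$, surjectivity onto $[0,\infty)$) explicit where the paper leaves them implicit.

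One point deserves attention. You correctly compute that for this $h$ the hypothesis of Proposition~\ref{prop: simpler_condition} reads
\[
\nabla h(z)\cdot z \;=\; |z|\,\theta(|z|)\;\le\;\sigma_r(|z|)+C\,\omega_r(|z|),
\]
but then you assert that this ``is the assumed compatibility bound \eqref{eq: radial_sym_cond}''. As written, \eqref{eq: radial_sym_cond} is $\theta(t)\le\sigma_r(t)+C\,\omega_r(t)$, which differs from what you need by a factor of $t$. The paper's own proof makes the same silent identification (it just says ``applying the previous proposition concludes''), so this appears to be a typo in the statement of the corollary rather than a flaw in your reasoning; nonetheless you should not claim the two inequalities are literally the same. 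Either read \eqref{eq: radial_sym_cond} as $t\,\theta(t)\le\sigma_r(t)+C\,\omega_r(t)$, or equivalently define $h$ so that $z\cdot\nabla h(z)=\theta(|z|)$ rather than $h_r'=\theta$.
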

    
    \begin{proof}
        Set $h(z) := \int_0^{|z|} \theta(s) \dd{s} = \kappa(|z|)$, then using that $\theta$ is strictly increasing, simple computations show that $h^*(z) = \kappa^*(|z|)$ where $\kappa^*(t) = \sup_{s \geq 0} s t - \kappa(s)$. Then using invertibility of $\theta$, one easily computes $\kappa^*(t) = \int_0^t \theta^{-1}(s) \dd{s}$. It is easy to see that $h$ satisfies the hypothesis of being a cost function. Hence applying the previous proposition (\ref{prop: simpler_condition}) concludes. 
    \end{proof}

\subsection{Classical logarithmic Sobolev inequality}

    Our theorem recovers the classical logarithmic Sobolev inequality for strongly log-concave measure. As mentioned in the introduction, we observe that we do not to restrict ourself to the choice of the quadratic cost, corresponding to the JKO scheme associated to the Fokker-Planck equation, to obtain the inequality. Indeed, the proof will show that any cost of the form $h_\tau(x)=\tau h\left({x}{\tau}\right)$ with $h$ convex increasing works, and letting $\tau$ goes to $0$ will give the result. This suggests that the same proof might be possible at the continuous level, using the flow $\partial_t \rho_t = \nabla \cdot [\rho_t \nabla h^*(\nabla \log \rho_t + \nabla V)]$ for any nice enough convex function $h$.  
    
    \begin{proposition}
        Under the our hypotheses on $V$, if $V$ is additionally $\Lambda$-convex, then we have the classical Log-Sobolev inequality:
        \begin{equation}
            \int_\Omega g e^g \dd{\eta} \leq \frac{1}{2 \Lambda} \int |\nabla g|^2 e^g \dd{\eta}
        \end{equation}
    \end{proposition}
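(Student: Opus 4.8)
The plan is to apply Proposition~\ref{prop: simpler_condition} (the simpler condition) with the cost function $h(z) = \frac{\tau}{2}|z|^2$ for $\tau > 0$ and then let $\tau \to 0$. The point is that the $\Lambda$-convexity of $V$ gives us a concrete modulus of convexity, after which the hypothesis~\eqref{eq: simpler_condition} becomes a trivial algebraic comparison once $\tau$ is chosen small enough.

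First I would record that if $V$ is $\Lambda$-convex, then $\sigma(z) = \frac{\Lambda}{2}|z|^2$ is a modulus of convexity for $V$ in the sense of Definition~\ref{def: modulus}: this is exactly the inequality $V(x) \geq V(y) + \nabla V(y)\cdot(x-y) + \frac{\Lambda}{2}|x-y|^2$. One may simply take $\omega \equiv 0$ as a (trivial) modulus of monotonicity and set $C = 0$, so that the compatibility condition~\eqref{eq: simpler_condition} reads $\nabla h(z)\cdot z \leq \frac{\Lambda}{2}|z|^2$. With $h_\tau(z) = \frac{\tau}{2}|z|^2$ we have $\nabla h_\tau(z)\cdot z = \tau |z|^2$, so the condition holds precisely when $\tau \leq \Lambda/2$. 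For any such $\tau$, Proposition~\ref{prop: simpler_condition} yields
\begin{equation*}
    \int_\Omega g e^g \dd{\eta} \leq \int_\Omega h_\tau^*(\nabla \log g)\, e^g \dd{\eta},
\end{equation*}
and since $h_\tau(z) = \frac{\tau}{2}|z|^2$ has Legendre transform $h_\tau^*(z) = \frac{1}{2\tau}|z|^2$, this is $\int g e^g \dd{\eta} \leq \frac{1}{2\tau}\int |\nabla \log g|^2 e^g \dd{\eta}$.

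Finally I would optimize in $\tau$: the right-hand side is decreasing in $\tau$, so the best choice is the largest admissible one, namely $\tau = \Lambda/2$, giving the constant $\frac{1}{2\tau} = \frac{1}{\Lambda}$. This already proves a Log-Sobolev inequality, but with a constant twice worse than the claimed $\frac{1}{2\Lambda}$; to recover the sharp constant one must instead keep the modulus of monotonicity in play. The correct bookkeeping is that $\Lambda$-convexity also gives $\omega(z) = \Lambda|z|^2$ as a modulus of monotonicity (from $(\nabla V(x)-\nabla V(y))\cdot(x-y) \geq \Lambda|x-y|^2$), so one should apply Theorem~\ref{thm: log_Sobolev} directly with $h = h_\tau$, $H = 0$ (hence $\alpha \equiv 0$), $L = h_\tau^*$, and $\sigma(z) = \frac{\Lambda}{2}|z|^2$: then the condition $L(z) + L^*(\nabla h^*(z)) \leq \sigma(\nabla h^*(z))$ becomes, after computing $L^* = h_\tau$, $\nabla h_\tau^* = \frac{1}{\tau}\id$, the bound $\frac{1}{2\tau}|z|^2 + \frac{\tau}{2}\cdot\frac{1}{\tau^2}|z|^2 \leq \frac{\Lambda}{2}\cdot\frac{1}{\tau^2}|z|^2$, i.e.\ $\frac{1}{\tau}|z|^2 \leq \frac{\Lambda}{2\tau^2}|z|^2$, valid for $\tau \leq \Lambda/2$, and with $G = H + L = h_\tau^*$ the resulting inequality is $\int g e^g \dd\eta \leq \frac{1}{2\tau}\int|\nabla\log g|^2 e^g\dd\eta$ — still giving $\frac{1}{\Lambda}$ at $\tau = \Lambda/2$.

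The main obstacle, then, is getting the optimal constant $\frac{1}{2\Lambda}$ rather than $\frac{1}{\Lambda}$: this requires the genuinely sharp split in which one exploits \emph{both} that $\frac{\Lambda}{2}|\cdot|^2$ is a modulus of convexity \emph{and} that $\Lambda|\cdot|^2$ is a modulus of monotonicity, choosing $H$ nonzero so that $\alpha(z)\omega(\nabla h^*(z))$ contributes. Concretely one takes $H = C_0 h_\tau^*$ so that $\alpha \equiv 2 C_0 \tau$ (since $\nabla H = \frac{C_0}{\tau}\id$ and $\nabla h_\tau^* = \frac{1}{\tau}\id$ forces $\alpha = C_0$... ) — I would choose the free parameters $C_0$ and $\tau$ to saturate the inequality $L(z) + L^*(\nabla h^*(z)) = \sigma(\nabla h^*(z)) + \alpha(z)\omega(\nabla h^*(z))$, then pass to the limit $\tau \to 0$ in the resulting Log-Sobolev constant; the limiting value of $1 + C$ (in the notation of Proposition~\ref{prop: simpler_condition}, suitably generalized) times the coefficient of $h_\tau^*$ should converge to exactly $\frac{1}{2\Lambda}$. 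Checking that the parameter optimization does land on $\frac{1}{2\Lambda}$ in the limit, and that all the regularity hypotheses of Theorem~\ref{thm: log_Sobolev} (in particular $V \in W^{1,\infty}(\Omega)$, which may fail on unbounded $\Omega$ and forces the $\Omega_n$-exhaustion argument from the end of Section~3) are met, is the part that needs care; everything else is a direct substitution into the already-established machinery.
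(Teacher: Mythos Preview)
Your first two attempts correctly show that taking $H=0$ yields only the constant $\tfrac{1}{\Lambda}$, and you rightly identify that reaching $\tfrac{1}{2\Lambda}$ requires a nonzero $H$ so that the monotonicity modulus $\omega(z)=\Lambda|z|^2$ enters. But the final paragraph, where the real argument should happen, is left as a sketch and contains a concrete error in the limit direction.

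The paper's choice is cleaner than what you outline: it takes $H = A|\cdot|^2$ as a \emph{fixed} quadratic, not tied to $h_\tau^*$, and $L = h_\tau^*$ for an \emph{arbitrary} cost $h$, with the scaling $h_\tau(x)=\tau h(x/\tau)$ so that $h_\tau^*=\tau h^*$. Since $L=h_\tau^*$ and $L^*=h_\tau$, Fenchel--Young gives $L(z)+L^*(\nabla h_\tau^*(z))=z\cdot\nabla h_\tau^*(z)$, while $\alpha(z)\,\omega(\nabla h_\tau^*(z))=2A\Lambda\,|z|\,|\nabla h_\tau^*(z)|$. The pointwise condition of Theorem~\ref{thm: log_Sobolev} then reduces (the $\sigma$ term only helps and can be dropped) to Cauchy--Schwarz and holds for every $\tau$ once $2A\Lambda\ge 1$. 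Taking $A=\tfrac{1}{2\Lambda}$ gives $G=\tfrac{1}{2\Lambda}|\cdot|^2+\tau h^*$, and $\tau\to 0$ removes the residual term. The crucial point is that the coefficient $\tfrac{1}{2\Lambda}$ appears \emph{independently of} $\tau$, because $H$ was decoupled from the cost.

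In your parametrization $h_\tau(z)=\tfrac{\tau}{2}|z|^2$ the dual is $h_\tau^*(z)=\tfrac{1}{2\tau}|z|^2$, so $\tau\to 0$ makes $L$ blow up, not vanish. If you actually carry out your computation with $H=C_0\, h_\tau^*$ and $L=h_\tau^*$, saturation of the condition forces $\tau=\tfrac{\Lambda(1+2C_0)}{2}$ and produces the Log-Sobolev constant $\tfrac{1+C_0}{\Lambda(1+2C_0)}$, which decreases to $\tfrac{1}{2\Lambda}$ only as $C_0\to\infty$ (equivalently $\tau\to\infty$). So the sharp constant is reachable along your route, but with the opposite limit from the one you stated; and even then the restriction to a quadratic cost and the coupling $H\propto h_\tau^*$ obscure the mechanism that the paper's choice of a fixed quadratic $H$ makes transparent.
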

    
    \begin{proof}
        Consider any $h$ satisfying the hypothesis of the introduction. Let $H = A |x|^2$ with $A$ to be chosen later on, so that $\alpha = 2 A |x| / |\nabla h^*|$, take $L = h^*$ so that the inequality reads as $\nabla h^* \cdot x \leq A \Lambda |x| \, |\nabla h^*|$ which is true as long as $2 A \Lambda \geq 1$. The minimal $A$ is then given by $1 / 2 \Lambda$. Hence we obtain
        \begin{equation*}
            \mathrm{Ent}_\mu(e^g) \leq \frac{1}{2 \Lambda} \int |\nabla g|^2 e^g \dd{\mu} + \tau \int h^*(|\nabla g|) e^g \dd{\mu}
        \end{equation*}
        If we let $\tau \to 0$ we obtain the inequality. 
    \end{proof}
    
    \begin{remark}
        As explained above, this amounts to compare the entropy and the Fisher's information along the flow of the equation $\partial_t \rho_t = \nabla \cdot [\rho \nabla h^*(\nabla \log \rho + \nabla V)]$. Indeed, this choice of $h^*_\tau$ corresponds to the JKO scheme with step $\tau$ associated to $h^*$. That is, if $\rho_{k+1}^\tau \in \argmin \rm{Ent}_\mu(\rho) + \scr{T}_{h_\tau}(\rho,\rho_k^\tau)$, then when $\tau \to 0$, the affine interpolation $\rho_t^\tau = \rho_k^\tau$ on $[k \tau,(k+1)\tau)$ converges to the solution of the flow. We expect that the inequality can be recovered by proving that the derivative of the entropy along the flow is comparable to the derivative of the Fisher's information, we do not pursue this here. 

        We can also now explain why we consider our method as interpolating between Bakry-Emery's method and Cordero-Gangbo-Houdré's method. If we consider the JKO scheme with cost $h_\tau = \tau h(\tau^{-1} \cdot)$ for a given cost $h$. Then as $\tau \to 0$, the JKO converges to the continuous flow equation, and our discrete flow method is then completely analogue to Bakry-Emery as the above example suggest. In the other hand, if we take $\tau \to +\infty$, we formally only have to perform one-step of the JKO to go from a measure to the global minimizer of the energy, and we only exploit the geodesic-convexity of the entropy to obtain the result. \\
        This formal $\tau \to +\infty$ case is also going to be the case when working with modulus of $p$-power type, we refer to the appendix for this case.
    \end{remark} 

\bibliographystyle{plain}
\bibliography{Ref}

\appendix

\section{Modulus of p-power type}

    In this section we consider potential $V$ admitting a $p$-power like moduli with $p \geq 2$, that is both the modulus of convexity and monotonicity of $V$ are proportional to $|\cdot|^p$. This is typically the case for $V = \frac{1}{p} |\cdot|^p$ where the modulus is given by $2^{2-p} |\cdot|^p$ (see lemma \ref{lemma: modulus_power_p} below). 

    We will see that our approach recover an inequality already obtained by Cordero-Gangbo-Houdré \cite{IneqGeneEntOptTranspo} and Bobkov-Ledoux \cite{BobkovLedoux}. Even though we cannot obtain a new inequality, we still believe that the computation illustrate our method well, and why it interpolates between Bakry-Emery and Cordero-Gangbo-Houdré, that's why we decided to keep the computations in the appendix.

    To tackle this case, we redo the proof deriving explicit dissipation inequalities adapted to the $p$-power case. We shall use the cost $h_p := \frac{1}{p} |\cdot|^p$ and denote by $\scr{I}_q$ the $q h_q$-Fisher's information, i.e. $\scr{I}_q(\rho) = \int_\Omega |\nabla u_\rho|^q \dd{\rho}$. We then have the following

    \begin{proposition}[$p$-power improvement of dissipation] \label{prop: p_power_dissipation}
        Then if $\rho = Q[\mu]$ using the cost $h^*_{\tau,q} = \tau h_q$, for $\tau > 0$, and $q$ conjugate exponent of $p$ we have:
        \begin{enumerate}
            \item If $V$ has modulus of convexity $\sigma =  \frac{\alpha}{p} |\cdot|^p$ we have
            \begin{equation} \label{eq: p_dissipation_entropy}
                \scr{F}(\rho) + \tau \scr{I}_q(\mu)^\frac{1}{q} \scr{I}_q(\rho)^\frac{1}{p} \geq \scr{F}(\mu) + \tau^p \frac{\alpha}{p} \scr{I}_q(\rho)
            \end{equation}
            which implies
            \begin{equation} \label{eq: p_dissipation_entropy_v2}
                \scr{F}(\rho) + \frac{1}{q} \frac{1}{\alpha^{q-1}} \scr{I}_q(\mu) \geq \scr{F}(\mu)
            \end{equation}
            \item If $V$ has modulus of monotonicity $\omega = \frac{\beta}{q} |\cdot|^p$ we have
            \begin{equation} \label{eq: p_dissipation_Fisher}
                \scr{I}_q(\mu) \geq (1+\tau^{p-1} \beta) \scr{I}_q(\rho)
            \end{equation}
        \end{enumerate}
    \end{proposition}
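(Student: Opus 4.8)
The plan is to specialize the three building blocks of the general argument --- the improved geodesic convexity (Proposition~\ref{prop: improved_geo_convexity}), the one-step dissipation of entropy (Proposition~\ref{prop: dissipation_entropy}) and the one-step dissipation of Fisher information (Proposition~\ref{prop: dissipation_information}) --- to the present choices, so that every Fisher-type term collapses onto a power of $\scr{I}_q$. The first step is to fix the algebra. The scheme uses a cost whose Legendre transform is $h^* = \tau h_q = \frac{\tau}{q}|\cdot|^q$, so $\nabla h^*(z) = \tau|z|^{q-2}z$ and $|\nabla h^*(z)| = \tau|z|^{q-1}$, and I will use repeatedly the identities $(q-1)p = q$, $q/p = q-1$ and $(p-1)(q-1) = 1$, all equivalent to $\frac1p+\frac1q = 1$. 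Since $\rho = Q[\mu]$, the optimality condition of Theorem~\ref{thm: one_step_JKO} gives $T - \id = \nabla h^*(\nabla u_\rho)$ for $T$ the optimal map from $\rho$ to $\mu$ (here $u_\rho = \log\rho + V$), hence $|\nabla h^*(\nabla u_\rho)|^p = \tau^p|\nabla u_\rho|^{(q-1)p} = \tau^p|\nabla u_\rho|^q$. Plugging $\sigma = \frac{\alpha}{p}|\cdot|^p$ and $\omega = \frac{\beta}{q}|\cdot|^p$ gives
\[
\int_\Omega \sigma\bigl(\nabla h^*(\nabla u_\rho)\bigr)\dd{\rho} = \frac{\alpha}{p}\tau^p\scr{I}_q(\rho), \qquad \int_\Omega \omega\bigl(\nabla h^*(\nabla u_\rho)\bigr)\dd{\rho} = \frac{\beta}{q}\tau^p\scr{I}_q(\rho),
\]
and I note $\scr{I}_q = \scr{I}_H$ for the admissible choice $H(z) = |z|^q$.

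For \eqref{eq: p_dissipation_entropy} I would not use Proposition~\ref{prop: dissipation_entropy} directly but go back one step, to Proposition~\ref{prop: improved_geo_convexity}: with $S$ the optimal map from $\mu$ to $\rho$,
\[
\scr{F}(\rho) \geq \scr{F}(\mu) + \int_\Omega \nabla u_\mu\cdot(\id - S)\dd{\mu} + \int_\Omega \sigma(\id - S)\dd{\mu}.
\]
Pushing forward by $T$ and using $S\circ T = \id$ $\rho$-a.e., the $\sigma$-term becomes $\frac{\alpha}{p}\tau^p\scr{I}_q(\rho)$ by the computation above, while the cross term becomes $\int_\Omega (\nabla u_\mu\circ T)\cdot\nabla h^*(\nabla u_\rho)\dd{\rho}$. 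I would bound the integrand from below by $-\tau|\nabla u_\mu\circ T|\,|\nabla u_\rho|^{q-1}$ (Cauchy--Schwarz in $\bb{R}^d$), then apply Hölder's inequality with the conjugate exponents $q$ and $p$; using $(q-1)p = q$ and the change of variables $\int_\Omega|\nabla u_\mu\circ T|^q\dd{\rho} = \scr{I}_q(\mu)$, the cross term is at least $-\tau\scr{I}_q(\mu)^{1/q}\scr{I}_q(\rho)^{1/p}$. Rearranging yields \eqref{eq: p_dissipation_entropy}.

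To pass from \eqref{eq: p_dissipation_entropy} to \eqref{eq: p_dissipation_entropy_v2} I would apply Young's inequality to $\tau\scr{I}_q(\mu)^{1/q}\scr{I}_q(\rho)^{1/p} = \scr{I}_q(\mu)^{1/q}\bigl(\tau^p\scr{I}_q(\rho)\bigr)^{1/p}$, choosing the weight so that the term proportional to $\tau^p\scr{I}_q(\rho)$ comes out exactly $\frac{\alpha}{p}\tau^p\scr{I}_q(\rho)$; the complementary term is then $\frac{1}{q\alpha^{q/p}}\scr{I}_q(\mu) = \frac{1}{q\alpha^{q-1}}\scr{I}_q(\mu)$, and after cancelling the two copies of $\frac{\alpha}{p}\tau^p\scr{I}_q(\rho)$ one is left with the $\tau$-free estimate $\scr{F}(\rho) + \frac{1}{q\alpha^{q-1}}\scr{I}_q(\mu) \geq \scr{F}(\mu)$. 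A slicker route to \eqref{eq: p_dissipation_entropy_v2} alone is to apply Proposition~\ref{prop: dissipation_entropy} with $L = \alpha^{1-q}h_q$: a direct Legendre computation shows $L^* = \frac{\alpha}{p}|\cdot|^p = \sigma$ (using $(\alpha^{1-q})^{1-p} = \alpha$), so the $L^*$-term and the $\sigma$-term in \eqref{eq: dissipation_entropy} cancel identically, leaving precisely $\int_\Omega L(\nabla u_\mu)\dd{\mu} = \frac{1}{q\alpha^{q-1}}\scr{I}_q(\mu)$.

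Finally, for \eqref{eq: p_dissipation_Fisher} I would apply Proposition~\ref{prop: dissipation_information} with $H(z) = |z|^q$, so $\scr{I}_H = \scr{I}_q$. Since $\nabla H(z) = q|z|^{q-2}z$ and $\nabla h^*(z) = \tau|z|^{q-2}z$ are parallel, the coefficient $\alpha(z) = |\nabla H(z)|/|\nabla h^*(z)|$ in \eqref{eq: dissipation_information} is the constant $q/\tau$; hence the dissipation term equals $\frac{q}{\tau}\int_\Omega \omega(\nabla h^*(\nabla u_\rho))\dd{\rho} = \frac{q}{\tau}\cdot\frac{\beta}{q}\tau^p\scr{I}_q(\rho) = \beta\tau^{p-1}\scr{I}_q(\rho)$, and \eqref{eq: dissipation_information} becomes $\scr{I}_q(\mu) \geq (1+\beta\tau^{p-1})\scr{I}_q(\rho)$, which is \eqref{eq: p_dissipation_Fisher}. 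None of the steps is deep; the two places that need care are the bookkeeping of which transport map pushes which measure (in the change of variables for the cross term and for $\scr{I}_q(\mu) = \int_\Omega|\nabla u_\mu\circ T|^q\dd{\rho}$) and the disciplined use of the identities $(q-1)p = q$, $q/p = q-1$, $(p-1)(q-1) = 1$, so that every power of $|\nabla u_\rho|$ that appears really does reduce to the $q$-th power defining $\scr{I}_q$; the only mildly delicate point is selecting the Young weight in the passage to \eqref{eq: p_dissipation_entropy_v2} so that the $\tau$-dependence disappears entirely.
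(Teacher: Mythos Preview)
The proposal is correct and takes essentially the same approach as the paper: the paper specializes Proposition~\ref{prop: dissipation_entropy} with $L = \frac{A^q}{q}|\cdot|^q$ and optimizes over $A$ to obtain \eqref{eq: p_dissipation_entropy} (choosing $A = \alpha^{-1/p}$ for \eqref{eq: p_dissipation_entropy_v2}), and takes $H = h_q$ in Proposition~\ref{prop: dissipation_information} for \eqref{eq: p_dissipation_Fisher}. Your route to \eqref{eq: p_dissipation_entropy} via Proposition~\ref{prop: improved_geo_convexity} plus a direct Cauchy--Schwarz/H\"older bound on the cross term is the same computation in disguise (H\"older is precisely Young optimized over the weight), and your ``slicker route'' $L = \alpha^{1-q}h_q$ is exactly the paper's choice $A = \alpha^{-1/p}$.
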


    \begin{proof}
        Let's tackle the inequalities separately.
        \begin{enumerate}
            \item For the first one we use $L = \frac{A^q}{q} |z|^q$ so that $L^* = \frac{1}{p A^p} |z|^p$. Using also that $|\nabla h^*_{\tau,q}| = \tau |z|^{q-1}$, we have
            \begin{align*}
                \int_\Omega L^*(\nabla h^*_{\tau,q}(\nabla u_\rho)) \dd{\rho} &= \frac{1}{A^p p} \tau^p \int_\Omega |\nabla u_\rho|^{p(q-1)} \dd{\rho} = \frac{1}{A^p p} \tau^p \int_\Omega |\nabla u_\rho|^q \dd{\rho} = \frac{1}{A^p p} \tau^p \scr{I}_q(\rho)
            \end{align*}
            Plugging this into the dissipation of entropy \ref{prop: dissipation_entropy}, and using $\sigma(\nabla h^*_{\tau,q}(z)) = \tau^p \frac{\alpha}{p} |z|^q$, we end up with
            \begin{equation*}
                \scr{F}(\rho) + \frac{A^q}{q} \scr{I}_q(\mu) + \frac{1}{p A^p} \tau^p \scr{I}_q(\rho) \geq \scr{F}(\mu) + \tau^p \frac{\alpha}{p} \scr{I}_q(\rho)
            \end{equation*}
            Optimizing on $A$ gives the result. On the other hand, if we choose $A$ so that both $\scr{I}_q$ appears with the same coefficient, i.e. $A = \alpha^{-\frac{1}{p}}$, we obtain the second inequality.
            \item For the Fisher's bound, we use $H = h_q$ in the dissipation of Fisher \ref{prop: dissipation_information} to get
            \begin{equation*}
                \frac{1}{q} \scr{I}_q(\mu) \geq \frac{1}{q} \scr{I}_q(\rho) + \frac{1}{\tau} \int \omega(\nabla h^*_{\tau,q}(\nabla u_\rho)) \dd{\rho} = \frac{1}{q} (1 +  \tau^{p-1} \beta) \scr{I}_q(\rho)
            \end{equation*}
        \end{enumerate} 
    \end{proof}

    We can now deduce the following generalized Log-Sobolev inequality.

    \begin{theorem}[]
        Suppose that $V$ has modulus of convexity $\sigma = \frac{\alpha}{p} |\cdot|^p$, then $\eta$ satisfies the following Log-Sobolev inequality
        \begin{equation}
            \int_\Omega g e^g \dd{\eta} \leq \frac{1}{q \alpha^{q-1}} \int_\Omega |\nabla g|^q e^g \dd{\eta}
        \end{equation}
    \end{theorem}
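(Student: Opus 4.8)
The plan is to apply Proposition~\ref{prop: p_power_dissipation} for a \emph{single} step of a JKO scheme whose cost is made arbitrarily cheap, and then to let the cost degenerate; this bypasses both the iteration and the convergence-to-equilibrium argument used for Theorem~\ref{thm: log_Sobolev}. Assume first that $\Omega$ is bounded, and fix $g$ with $\int_\Omega e^g\dd{\eta}=1$ and $\nabla g\in W^{1,\infty}(\Omega)$. I would set $\rho_0:=e^{g}\eta=e^{g-V}$; since $g$ and $V$ are bounded on $\overline{\Omega}$, $\rho_0$ is an absolutely continuous probability density with $\rho_0>0$ and $u_{\rho_0}=\log\rho_0+V=g\in W^{1,\infty}(\Omega)$, so that $\scr{F}(\rho_0)=\int_\Omega g\,e^{g}\dd{\eta}$ and $\scr{I}_q(\rho_0)=\int_\Omega|\nabla g|^q e^{g}\dd{\eta}$, both finite.

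For $\tau>0$, let $c_\tau:=\tau^{1-p}h_p$ be the cost function whose Legendre transform is $h^*_{\tau,q}=\tau h_q$; it is a cost function in the sense of the introduction for every $\tau>0$, so by Theorem~\ref{thm: one_step_JKO} there is a unique minimizer $\rho_\tau:=Q_\tau[\rho_0]$ of $\scr{F}(\cdot)+\scr{T}_{c_\tau}(\cdot,\rho_0)$, with $\rho_\tau>0$ and $u_{\rho_\tau}\in W^{1,\infty}(\Omega)$. Since $V$ admits $\sigma=\tfrac{\alpha}{p}\,|\cdot|^p$ as modulus of convexity, I would invoke part~(1) of Proposition~\ref{prop: p_power_dissipation}, namely inequality~\eqref{eq: p_dissipation_entropy_v2}, with $\mu=\rho_0$ and $\rho=\rho_\tau$, to get
\[
\scr{F}(\rho_0)\ \le\ \scr{F}(\rho_\tau)+\frac{1}{q\,\alpha^{q-1}}\,\scr{I}_q(\rho_0);
\]
note that only the modulus of convexity is used here.

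Then I would send $\tau\to+\infty$ to kill the term $\scr{F}(\rho_\tau)$. By optimality of $\rho_\tau$, comparing with the competitor $\eta$,
\[
0\ \le\ \scr{F}(\rho_\tau)\ \le\ \scr{F}(\rho_\tau)+\scr{T}_{c_\tau}(\rho_\tau,\rho_0)\ \le\ \scr{F}(\eta)+\scr{T}_{c_\tau}(\eta,\rho_0)\ =\ \tau^{1-p}\,\scr{T}_{h_p}(\eta,\rho_0),
\]
using that $\scr{F}\ge 0$ with $\scr{F}(\eta)=0$ (it is the relative entropy of $\cdot$ with respect to $\eta$) and the homogeneity $\scr{T}_{c_\tau}=\tau^{1-p}\,\scr{T}_{h_p}$. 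On a bounded domain $\scr{T}_{h_p}(\eta,\rho_0)<+\infty$ (any coupling has cost at most $\tfrac{1}{p}\operatorname{diam}(\Omega)^p$), so the right-hand side tends to $0$ and hence $\scr{F}(\rho_\tau)\to 0$. Passing to the limit in the previous display gives $\scr{F}(\rho_0)\le \tfrac{1}{q\alpha^{q-1}}\scr{I}_q(\rho_0)$, i.e. $\int_\Omega g\,e^{g}\dd{\eta}\le \tfrac{1}{q\alpha^{q-1}}\int_\Omega|\nabla g|^q e^{g}\dd{\eta}$. For unbounded $\Omega$ I would exhaust $\Omega$ by bounded convex subdomains, on each of which $\sigma$ is still a modulus of convexity for $V$, and pass to the limit exactly as at the end of the proof of Theorem~\ref{thm: log_Sobolev}.

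The only delicate point is the limit $\scr{F}(\rho_\tau)\to 0$, which is why the argument is run one step at a time with a degenerating cost rather than by iterating a fixed JKO scheme: iterating $c_\tau$ for fixed $\tau$ would instead require summing the geometric series $\sum_k \scr{I}_q(\rho_k)$, which forces one to also use the Fisher dissipation~\eqref{eq: p_dissipation_Fisher} (hence to assume a modulus of monotonicity for $V$, for instance $2\sigma$) and only recovers the sharp constant $\tfrac{1}{q\alpha^{q-1}}$ after the same limit $\tau\to+\infty$. Everything else is bookkeeping already present in Proposition~\ref{prop: p_power_dissipation}: that $c_\tau$ is admissible, that $\scr{F}(\rho_0)$, $\scr{I}_q(\rho_0)$ and $\scr{T}_{h_p}(\eta,\rho_0)$ are finite, and that \eqref{eq: p_dissipation_entropy_v2} carries the constant $\tfrac{1}{q\alpha^{q-1}}$ with this normalization of the cost.
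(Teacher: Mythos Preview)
Your argument is correct. The identification $c_\tau=\tau^{1-p}h_p$ with $(c_\tau)^*=\tau h_q$ is right, inequality~\eqref{eq: p_dissipation_entropy_v2} indeed holds independently of $\tau$ (the $\tau^p\scr{I}_q(\rho)$ terms cancel for the choice $A=\alpha^{-1/p}$), and the competitor estimate $0\le\scr{F}(\rho_\tau)\le\scr{T}_{c_\tau}(\eta,\rho_0)=\tau^{1-p}\scr{T}_{h_p}(\eta,\rho_0)\to0$ is valid on a bounded domain since $p\ge2$.

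However, your route is genuinely different from the paper's. The paper \emph{iterates} the JKO scheme at fixed $\tau$, uses Proposition~\ref{prop: convergence_equlibrium} to get $F_k\to0$, and relies on \emph{both} parts of Proposition~\ref{prop: p_power_dissipation}: the Fisher dissipation~\eqref{eq: p_dissipation_Fisher} controls $\sum_k I_k$ geometrically, and only after summing and letting $\theta=\beta\tau^{p-1}\to\infty$ does the sharp constant $\tfrac{1}{q\alpha^{q-1}}$ emerge. Your one-step argument bypasses the iteration, the convergence-to-equilibrium proposition, and the Fisher dissipation entirely; it uses only the modulus of convexity. This is shorter and logically weaker in its hypotheses, but it is essentially the Cordero--Gangbo--Houdr\'e limit the paper alludes to in the remark following the theorem (``if we take $\tau\to+\infty$, we formally only have to perform one step of the JKO\dots and we only exploit geodesic convexity''). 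The paper's longer proof is kept precisely to exhibit the discrete Bakry--\'Emery mechanism and the interpolation with CGH, which is the article's advertised point; your proof sacrifices that illustration for brevity. You already diagnose this trade-off accurately in your closing paragraph.
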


    \begin{proof}
        We can choose any $\beta$ such that $\frac{\beta}{q} |\cdot|^p$ is a modulus of monotonicity (which exists since $2 \sigma$ is a modulus of monotonicity). Iterating the inequalities of proposition \ref{prop: p_power_dissipation} along the JKO scheme for a given $\tau$, we obtain the following inequalities: if we let $F_k := \scr{F}(\rho_k)$ and $I_k := \scr{I}_q(\rho_k)$ and working with the parameter $\theta = \beta \tau^{p-1}$ instead of $\tau$ This means that the following dissipation of entropy holds
        \begin{equation*}
            F_{k+1} + \frac{1}{q} \frac{1}{\alpha^{q-1}} I_k \geq F_k
        \end{equation*}
        and the dissipation of Fisher's $I_{k+1} \geq (1+\theta) I_k$, giving $I_k \leq (1+\theta)^{-k} I_0$. Now if we sum the inequality, using that $F_k \to 0$ as $k \to +\infty$ by \ref{prop: convergence_equlibrium} we obtain
        \begin{align*}
            F_0 &= \sum_{k=0}^\infty F_k - F_{k+1} \leq \frac{1}{q} \frac{1}{\alpha^{q-1}} \sum_{k=0}^\infty I_k \\
            &\leq \frac{1}{q} \frac{1}{\alpha^{q-1}} \left ( \sum_{k=0}^\infty \frac{1}{(1+\theta)^k} \right ) I_0 = \frac{1}{q} \frac{1}{\alpha^{q-1}} \left ( 1 + \frac{1}{\theta} \right ) I_0
        \end{align*}
        Passing to the limit as $\theta \to +\infty$, i.e. $\tau \to +\infty$, we deduce the inequality 
        \begin{equation*}
            \scr{F}(\rho_0) \leq \frac{1}{q \alpha^{q-1}} \scr{I}_q(\rho_0)
        \end{equation*}
        from which the generalized Log-Sobolev follows by change of variable.
    \end{proof}

    \begin{remark}
        This proof suggests that a Bakry-Emery's based proof of the above generalized $p$-Log-Sobolev inequality using the $p$-Laplace porous-medium flow 
        \begin{equation*}
            \partial_t \rho = \nabla \cdot (\rho |\nabla \log \rho|^{q-2} \nabla \log \rho) \propto \Delta_q \rho^\frac{1}{q-1} 
        \end{equation*} 
        might fail. Indeed, as when $\tau \to +\infty$, our dissipation bound explodes. On the other hand, for any $\tau > 0$ we can obtain a discrete version of this proof, and then pass to the limit to recover Cordero-Gangbo-Houdré result, which again suggests that our method is an interpolation between Bakry-Emery and Cordero-Gangbo-Houdré.
    \end{remark}

    Let's give a classical example of a potential which satisfies the above hypothesis:

    \begin{lemma}[Modulus of power-$p$] \label{lemma: modulus_power_p}
        Let $V_p = \frac{1}{p} |\cdot|^p$ with $p \geq 2$, then $V_p$ admits $2^{2-p} |\cdot|^p$ as a modulus of monotonicity, and $C(p) |\cdot|^p$ as modulus of convexity, with 
        \begin{equation}
            C(p) = \min_{t \in [-1,0]} \frac{1}{p} |t+1|^p - \frac{1}{p} |t|^p - |t|^{p-2} t \geq \frac{2^{2-p}}{p}
        \end{equation}
        Which is attained for the unique solution $t_p \in [-1,0]$ of the equation:
        \begin{equation}
            |t_p+1|^{p-2} (t_p+1) = |t_p|^{p-2} t_p + (p-1) |t_p|^{p-2}
        \end{equation}
    \end{lemma}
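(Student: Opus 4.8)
\emph{Proof proposal.} The plan is to handle the two assertions — the modulus of monotonicity and the modulus of convexity — by the same device: reduce each $d$-dimensional inequality to a one-dimensional one by projecting onto the line spanned by $x-y$. Concretely, for $x\neq y$ set $z=x-y$, $n=z/|z|$, and record the components $c_2:=x\cdot n$, $c_1:=y\cdot n$ (so that $c_2-c_1=|z|$) together with $m:=|y|^2-c_1^2=|x|^2-c_2^2\ge 0$, the squared length of the part of $x$ (equivalently of $y$, since $x-y\parallel n$) orthogonal to $n$. With $\phi_m(\sigma):=\tfrac1p(\sigma^2+m)^{p/2}$ one then has the identities $V_p(x)=\phi_m(c_2)$, $V_p(y)=\phi_m(c_1)$, $\nabla V_p(y)\cdot z=|z|\,\phi_m'(c_1)$ and $(\nabla V_p(x)-\nabla V_p(y))\cdot z=|z|\,(\phi_m'(c_2)-\phi_m'(c_1))$; in particular the Bregman divergence satisfies $D_{V_p}(x:y)=D_{\phi_m}(c_2:c_1)$. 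All the quantities at play are positively homogeneous in $(x,y)$, so after rescaling it suffices to study the one-parameter family $\phi_m$ on an interval of length $1$.

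For the \emph{modulus of monotonicity} I would show $\inf_{c_1\in\mathbb{R},\,m\ge 0}\bigl(\phi_m'(c_1+1)-\phi_m'(c_1)\bigr)=2^{2-p}$. Writing $\phi_m''(\sigma)=(\sigma^2+m)^{(p-4)/2}\bigl[(p-1)\sigma^2+m\bigr]$, a short computation (treating it as a function of $\sigma^2$ and differentiating) shows $\phi_m''$ is even in $\sigma$ and strictly increasing on $[0,\infty)$. Hence for fixed $m$ the map $c_1\mapsto\int_{c_1}^{c_1+1}\phi_m''$ is stationary only where $\phi_m''(c_1+1)=\phi_m''(c_1)$, i.e.\ at $c_1=-\tfrac12$, and (since the integral tends to $+\infty$ as $c_1\to\pm\infty$) this is the global minimum; there $\phi_m'(\tfrac12)-\phi_m'(-\tfrac12)=2\phi_m'(\tfrac12)=(\tfrac14+m)^{(p-2)/2}$, which over $m\ge 0$ is minimized at $m=0$, giving the value $2^{2-p}$. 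Equality at $x=-y=\tfrac12 e_1$ shows the constant is in fact sharp.

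For the \emph{modulus of convexity}, the inequality $C(p)\ge 2^{2-p}/p$ follows from the previous step by the usual integration: since $2^{2-p}|\cdot|^p$ is a modulus of monotonicity,
\begin{equation*}
D_{V_p}(x:y)=\int_0^1 s^{-1}\bigl(\nabla V_p(y+sz)-\nabla V_p(y)\bigr)\cdot(sz)\,\dd{s}\ \ge\ \int_0^1 s^{-1}\,2^{2-p}|sz|^p\,\dd{s}\ =\ \frac{2^{2-p}}{p}\,|z|^p ,
\end{equation*}
and $C(p)$, being the sharp constant, is at least this. For the exact value one minimizes $D_{\phi_m}(c_1+1:c_1)$ over $c_1\in\mathbb{R}$ and $m\ge 0$. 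The key claim is that the infimum is attained on the slice $m=0$: for $p\ge 3$ this is immediate because $\partial_m D_{\phi_m}(c_1+1:c_1)=D_{\xi_m}(c_1+1:c_1)\ge 0$, where $\xi_m:=\partial_m\phi_m=\tfrac12(\cdot^2+m)^{(p-2)/2}$ is convex, so the divergence is nondecreasing in $m$; for $2\le p<3$ one instead minimizes first in $c_1$ and checks the resulting function of $m$ is smallest at $m=0$. On $\{m=0\}$ one has $D_{\phi_0}(t+1:t)=\tfrac1p|t+1|^p-\tfrac1p|t|^p-|t|^{p-2}t=:\psi(t)$, and a direct study of $\psi$ concludes: $\psi$ is $C^1$ for $p\ge 2$, is nonnegative, satisfies $\psi'\le 0$ on $(-\infty,-\tfrac12]$ and $\psi'\ge 0$ on $[0,\infty)$, while on $(-\tfrac12,0)$ one has $\psi''>0$ (since there $0<|t|<t+1$), so $\psi'$ vanishes exactly once on $(-1,0)$. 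This yields $\min_{\mathbb{R}}\psi=\min_{[-1,0]}\psi=C(p)$, attained at the unique $t_p\in(-1,0)$ with $\psi'(t_p)=0$, i.e.\ $|t_p+1|^{p-2}(t_p+1)=|t_p|^{p-2}t_p+(p-1)|t_p|^{p-2}$, and feeding this back through the projection identity gives $D_{V_p}(x:y)\ge C(p)|x-y|^p$ (with equality for the colinear configuration of parameter $t_p$).

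The step I expect to be the main obstacle is the second half of the last paragraph: the reduction of the convexity-modulus optimization to the slice $m=0$ together with the unimodality analysis of $\psi$ (and the verification that the minimizer lies in $[-1,0]$). This is genuinely delicate in the range $2\le p<3$, where $t\mapsto t^{p-2}$ is concave and several of the convexity estimates that make the $p\ge 3$ case transparent reverse; one has to argue more carefully with the explicit one-variable functions. Everything else is elementary once the identity $D_{V_p}(x:y)=D_{\phi_m}(c_2:c_1)$ is in place.
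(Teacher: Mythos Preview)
Your argument is correct in outline and more explicit than the paper's, but the reduction to one dimension is organized differently. You parameterize by the orthogonal component $m$ and then argue that the minimum over $(c_1,m)$ sits on the slice $m=0$; the paper instead fixes $v=x-y$, normalizes $|v|=1$, and observes that the gradient in $x$ of each objective has the form $\Phi\, x + \Psi\, v$ for scalar coefficients $\Phi,\Psi$. At an interior critical point with $x\not\parallel v$ this would force $\Phi=\Psi=0$; for the convexity objective one computes $\Psi=|x+v|^{p-2}-|x|^{p-2}$ and $\Phi=\Psi-(p-2)|x|^{p-4}(x\cdot v)$, and $\Psi=0$ together with $|v|=1$ gives $x\cdot v=-\tfrac12$, whence $\Phi=\tfrac{p-2}{2}|x|^{p-4}\neq 0$ for $p>2$. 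So any minimizer is colinear with $v$, i.e.\ lies on your slice $m=0$, and one drops directly to the one-variable function $\psi$.

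The practical payoff: the paper's critical-point argument works uniformly across all $p\ge 2$ and bypasses exactly the step you flag as the main obstacle --- the $m$-monotonicity of $D_{\phi_m}(c_1+1:c_1)$ in the range $2<p<3$, which you leave as an unverified check. Your route, by contrast, yields a transparent monotonicity-in-$m$ statement for $p\ge 3$ (via the convexity of $\xi_m$) and makes the role of the orthogonal component explicit, at the cost of that residual gap. Once on the line the two proofs coincide: the $t=-\tfrac12$ minimizer for the monotonicity modulus, the integration identity giving $C(p)\ge 2^{2-p}/p$, and the first-order condition characterizing $t_p$. Your treatment of the monotonicity case via the evenness and monotonicity of $\phi_m''$ is in fact more detailed than the paper's terse ``by critical point argument''.
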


    \begin{proof}
        The proof is standard, and recalls some computations for pointwise vector inequalities used in the study of the p-laplacian. We include some element of proof for the sake of completeness. Replacing $x-y$ by a vector $v$ we want to show 
        \begin{align*}
            &(|x+v|^{p-2} (x+v) - |x|^{p-2} x) \cdot v \geq 2^{2-p} |v|^p \\
            &\frac{1}{p} |x+v|^p \geq \frac{1}{p} |x|^p + |x|^{p-2} x \cdot v + C(p) |v|^p
        \end{align*}
        In both case, we can assume $|v|=1$ by normalization (the case $v=0$ being trivial). Which gives
        \begin{align*}
            &(|x+v|^{p-2} (x+v) - |x|^{p-2} x) \cdot v \geq 2^{2-p} \\
            &\frac{1}{p} |x+v|^p - \frac{1}{p} |x|^p - |x|^{p-2} x \cdot v \geq C(p)
        \end{align*}
        We then minimize on $x$, a simple differentiation argument show that, at $x \notin \{ 0, - v\}$, the gradient (in $x$) of the two functions is of the form $\Phi(x) x + \Psi(v) v$ for some non-zero scalar functions $\Phi,\Psi$, hence optimality forces $x$ to be parallel to $v$, hence we can reduce to the $1$-d case an prove
        \begin{align*}
            &|t+1|^{p-2} (t+1) - |t|^{p-2} t \geq 2^{2-p} \\
            &\frac{1}{p} |t+1|^p - \frac{1}{p} |t|^p - |t|^{p-2} t \geq C(p)
        \end{align*}
        in the first case, again by critical point argument, one obtain that the minimum is attained at $t = -\frac{1}{2}$, which gives the first inequality. In the second case, again by critical point, optimality condition is
        \begin{equation*}
            |t+1|^{p-2} (t+1) = |t|^{p-2} t + (p-1) |t|^{p-2} 
        \end{equation*}
        and one can check that the optimal $t$ must be on $[-1,0]$, which gives the result (notice that we must have $C(p) > 0$ as $2^{2-p} \int_0^1 s^{-1} |s \cdot|^p = \frac{2^{2-p}}{p} |\cdot|^p$ is a modulus of convexity.
    \end{proof}

    \begin{remark}
        One can explicitly compute that $C(3) = \frac{1}{3}(2 - \sqrt{2})$ with $t_3 = \frac{1}{\sqrt{2}} - 1$, showing in general it is possible that the inequalities $\omega \geq 2 \sigma$ and $\sigma \geq \int_0^1 t^{-1} \omega(t \cdot) \dd{t}$ are strict.
    \end{remark}
    
\end{document}